\documentclass[12pt]{amsart}

\usepackage[utf8]{inputenc}
\usepackage{hyperref}
\usepackage{comment}
\usepackage{amssymb,amsthm,amsmath,mathrsfs,amssymb}
\usepackage[mathscr]{euscript}

\usepackage{enumitem}

\makeatletter
\def\paragraph{\@startsection{paragraph}{4}%
	\z@\z@{-\fontdimen2\font}%
	{\normalfont\bfseries}}
\makeatother

\usepackage{xcolor}

\newcommand{\C}{\mathbb{C}}

\newcommand{\R}{\mathbb{R}}

\newcommand*\colvec[3][]{
	\begin{pmatrix}\ifx\relax#1\relax\else#1\\\fi#2\\#3\end{pmatrix}
}
\newcommand*\abs[1]{
	\left|#1\right|
}

\newcommand*\deriv{
	\mathrm{d}
}
\newcommand*\Leb{
	\mathrm{Leb}
}

\newcommand\restr[2]{{% we make the whole thing an ordinary symbol
		\left.\kern-\nulldelimiterspace % automatically resize the bar with \right
		#1 % the function
		\littletaller % pretend it's a little taller at normal size
		\right|_{#2} % this is the delimiter
	}}
	\newcommand{\littletaller}{\mathchoice{\vphantom{\big|}}{}{}{}}

\newcommand{\norm}[1]{\left\lVert#1\right\rVert}
\newtheorem{thm}{Theorem}
\newtheorem*{thm*}{Theorem}
\newtheorem{lem}{Lemma}[section]

\theoremstyle{definition}

\newtheorem*{rem}{Remark}
\newtheorem*{rems}{Remarks}
\newtheorem*{assumption}{Assumption}
\newtheorem{defn}{Definition}
\newtheorem{notation}{Notation}
\newtheorem{claim}{Claim}
\renewcommand{\leq}{\leqslant}
\renewcommand{\geq}{\geqslant}
\normalsize \linespacing=\baselineskip

\evensidemargin=.65in{}
\usepackage{fullpage}
\title{Quantitative Fourier decay for Patterson-Sullivan measures of dimension larger than $1/2$}

\author{F\'{e}lix Lequen}
\address{Université Sorbonne Paris Nord, Villetaneuse, France}
\email{lequen@math.univ-paris13.fr}

\author{Tuomas Sahlsten}
\address{University of Helsinki, Helsinki, Finland}
\email{tuomas.sahlsten@helsinki.fi}

\thanks{F.L. was supported by T. Orponen’s grant from the Research Council of Finland via the project Approximate Incidence
Geometry, grant no. 35545, and from ANR-SNF project \emph{``Equidistribution in Number Theory''} (FNS no. 10.003.145 and ANR-24-CE93-0016). T.S. was supported by the Research Council of Finland's grant \emph{``Quantum chaos of large and many body systems''} (Nos. 347365, 353738).}

\begin{document}

\begin{abstract}
We give an elementary proof of power Fourier decay for Patterson-Sullivan measures associated to convex co-compact Schottky groups of dimension $\delta>\frac12$, obtaining the explicit decay exponent $\frac{\delta(2\delta - 1)}{(2\delta + 1)(3 - \delta)}$. The proof replaces the technical machinery used in previous works, such as sum-product estimates, renewal theory, Dolgopyat methods, and $L^2$ flattening, by elementary oscillatory integral estimates, hyperbolic geometry, and a duality argument based on the transpose Schottky group.
\end{abstract}

\maketitle

\section{Introduction}

Let $\Gamma$ be a convex co-compact Schottky subgroup of ${\rm PSL}_2(\mathbb{R})$. Then $\Gamma$ acts on 
$\partial \mathbb{H}^2=\R\cup\{\infty\}$ by M\"obius transformations.
The \emph{limit set} $\Lambda_\Gamma$ is the minimal non-empty closed $\Gamma$-invariant subset of $\R\cup\{\infty\}$. It is a compact Cantor set of Hausdorff dimension $\delta\in(0,1)$ with a Schottky coding by a subshift of finite type. The associated \textit{Patterson-Sullivan measure} $\mu$ (PS measure for short) is the unique $\delta$-conformal probability measure supported on $\Lambda_\Gamma$, characterised by
$
d\gamma_\ast\mu(x) = \abs{\gamma'(x)}^\delta d\mu(x)
$ for all $\gamma\in\Gamma$. In their groundbreaking work, Bourgain and Dyatlov \cite{BD} proved that there is $\eta > 0$ such that the Fourier transform $
\widehat{\mu}(\xi) = \int e^{-2\pi i\xi x}\,d\mu(x)$, $\xi\in\R,$
satisfies $|\widehat{\mu}(\xi)| \lesssim |\xi|^{-\eta}$ for all $|\xi| \geq 1$, using the sum-product theorem of Bourgain \cite{Bourgain}. Their motivation was to gain an improvement of the essential spectral gap under pressure condition depending only on $\delta$, via a Fractal Uncertainty Principle \cite{DyatlovZahl}. See also a renewal theoretic approach by Li \cite{LiStationary}. The result was also generalised to three dimensions in \cite{LNP} and in \cite{BKS} a new proof based on $L^2$ flattening that applied in any dimension.

More generally, Fourier decay for dynamically defined measures has attracted considerable attention in recent years (see e.g. \cite{AlgomChangWuWu,AlgomHertzWangLog,AlgomHertzWangPointwise,AlgomHertzWang,AlgomHertzWangPlane,Kaufman,Tsujii,SahlstenStevens,BakerSahlsten,BakerBanaji,LiSahlstenAffine,LiENS,LiSahlsten,LindenstraussVarju,VarjuYu,Rapaport,LeclercJulia,LeclercBunched,LeclercHyperbolic,LeclercOscillatory}) due to links to various related areas including Diophantine approximation \cite{PollingtonVelaniZafeiropoulosZorin}, multiscale additive combinatorics \cite{BD,SahlstenStevens,BakerSahlsten,LeclercPaukkonenSahlsten}, exponential mixing of Anosov systems \cite{LeclercFibonacci} and quantum chaos \cite{BD}, for which we refer e.g. to the survey \cite{Sahlsten}. Despite all this progress, obtaining explicit quantitative Fourier decay exponents remains difficult. There has been work in some self-similar cases \cite{Lee,BakerBanajiSlow}, as well as their pushforwards by non-linear maps \cite{BanajiYu}. 

Here we deal with the so-called ``totally non-linear’’ case of \cite{SahlstenStevens}, where existing proofs rely on sophisticated tools such as sum-product theory \cite{BD,Bourgain}, renewal theory \cite{LiStationary,AlgomHertzWang}, or $L^2$ flattening \cite{BKS,Khalil}. In this paper we show that for Patterson-Sullivan measures of dimension $\delta > 1/2$  none of this machinery is required. Instead, Fourier decay follows from a combination of oscillatory integral estimates, hyperbolic geometry and a non-concentration argument for the transpose Schottky group. Crucially, this allows us to gain an explicit polynomial decay exponent.

\begin{thm}\label{thm:main}
Suppose $\delta > 1/2$. There exists a constant $C = C(\Gamma) > 0$ such that
\[
\abs{\widehat{\mu}(\xi)}\le C\,\abs{\xi}^{-\frac{\delta(2\delta - 1)}{(2\delta + 1)(3 - \delta)}}\qquad\text{for all }\abs{\xi}\ge1.
\]
\end{thm}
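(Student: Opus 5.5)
We will prove Theorem~\ref{thm:main} by combining invariance of $\mu$ under the Schottky words with a duality argument. The starting point is the conformality relation $d\gamma_*\mu=|\gamma'|^\delta d\mu$. Fix a scale $\tau\in(0,1)$ and write $\mu$ as a sum over the Markov partition of words $\mathbf a$ with $|\gamma_{\mathbf a}'|\approx \tau$ on the relevant cylinder $I_{\mathbf a}$. This gives
\[
\widehat\mu(\xi)=\sum_{\mathbf a}\int e^{-2\pi i \xi\gamma_{\mathbf a}(x)}\,|\gamma_{\mathbf a}'(x)|^\delta\,d\mu(x),
\]
where $x$ ranges over a fixed compact piece of the limit set, namely the cylinder corresponding to the last letter. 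Each $\gamma_{\mathbf a}$ is a Möbius map $x\mapsto (a x+b)/(cx+d)$. Taylor expanding around a base point $x_0$ gives
\[
\gamma_{\mathbf a}(x)=\gamma_{\mathbf a}(x_0)+\gamma_{\mathbf a}'(x_0)\,\frac{x-x_0}{1-\lambda_{\mathbf a}(x-x_0)},\qquad \lambda_{\mathbf a}=\frac{c}{cx_0+d}.
\]
The phase is therefore governed by two parameters: the frequency $\eta_{\mathbf a}=\xi\gamma_{\mathbf a}'(x_0)\approx \xi\tau$, and the nonlinearity $\lambda_{\mathbf a}$.

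\textbf{Step 1 (Cauchy--Schwarz and oscillatory integral).} Apply Cauchy--Schwarz in $\mathbf a$. Using $\sum_{\mathbf a}|\gamma'_{\mathbf a}|^{\delta}\approx 1$, we get
\[
|\widehat\mu(\xi)|^2\lesssim \sum_{\mathbf a}\tau^{\delta}\iint e^{-2\pi i \eta_{\mathbf a}(\phi_{\mathbf a}(x)-\phi_{\mathbf a}(y))}\,d\mu(x)\,d\mu(y).
\]
In this sum, the contributions with $|x-y|\le \rho$ are bounded by $\mu\times\mu(\{|x-y|\le\rho\})\lesssim\rho^\delta$ (Frostman). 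For the contributions with $|x-y|>\rho$, we view the sum over $\mathbf a$ as an integral against the distribution of the pair $(\eta_{\mathbf a},\lambda_{\mathbf a})$. The phase difference $\phi_{\mathbf a}(x)-\phi_{\mathbf a}(y)$ depends nonlinearly on $\lambda_{\mathbf a}$, with derivative in $\lambda$ comparable to $|x-y|(x+y-2x_0)$ up to bounded factors. So if the parameters $\lambda_{\mathbf a}$, rescaled by $\xi\tau$, are spread out, van der Corput or simple integration by parts in the $\lambda$ variable gains a factor $(\xi\tau\rho^2 \cdot \text{spread})^{-1}$. Alternatively, the gain can be taken after smoothing at scale $\sigma$.

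\textbf{Step 2 (duality via the transpose group).} The key point is that $\lambda_{\mathbf a}=c/(cx_0+d)$ is essentially the image of the point $\infty$ (or of $-d/c$, in reversed coordinates) under the transpose word $\gamma_{\mathbf a}^{T}$. The transpose group $\Gamma^T$ is again a convex co-compact Schottky group with the same critical exponent $\delta$. Hence the weighted distribution of $\lambda_{\mathbf a}$ over words of derivative $\approx\tau$ is comparable to the PS measure $\mu^T$ of $\Gamma^T$ at resolution $\tau^{1/2}$. In particular it satisfies the non-concentration bound
\[
\sum_{\mathbf a:\,\lambda_{\mathbf a}\in B(z,r)}\tau^\delta\lesssim r^\delta \qquad (r\ge \tau^{1/2}).
\]
The exponent $\tfrac12$ arises because $|\gamma'|\sim |c|^{-2}$. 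Given this, we bound the $\lambda$-sum by an $L^2$/energy argument: the sum of $e^{i\xi\tau\,\Phi(\lambda)}$ against a $\delta$-Frostman measure decays like $(\xi\tau\rho^2)^{-\delta/2}$ or similar. The condition $\delta>1/2$ is exactly what makes the Frostman bound at the natural scale $\tau^{1/2}$ beat the trivial bound. This is the step I expect to be the main obstacle: making rigorous that the sum over words of a phase in $\lambda_{\mathbf a}$ is controlled by $\mu^T$ with explicit constants, and handling the joint dependence on $\eta_{\mathbf a}$, which must be frozen by a further partition of the words into dyadic ranges of $\gamma_{\mathbf a}'(x_0)$.

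\textbf{Step 3 (optimisation).} Collecting the three terms gives a bound of the form
\[
|\widehat\mu(\xi)|^2\lesssim \rho^\delta+\tau^{\alpha}+(|\xi|\tau\rho^2)^{-\beta}\,\tau^{-\gamma},
\]
where $\tau^{\alpha}$ accounts for the discretisation error at resolution $\tau^{1/2}$, with $\alpha$ involving $2\delta-1$. We then choose $\tau=|\xi|^{-s}$ and $\rho=|\xi|^{-t}$ to balance the terms. I expect this to produce the exponent $\frac{\delta(2\delta-1)}{(2\delta+1)(3-\delta)}$: the factor $2\delta-1$ comes from the transpose non-concentration, and the denominator $(2\delta+1)(3-\delta)$ comes from balancing $\rho$ against $\tau$. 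The bounded range $1\le|\xi|\le C$ is trivial since $|\widehat\mu|\le 1$.
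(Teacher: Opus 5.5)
\textbf{The gap is in Step 2.} It is the entire difficulty of the problem, not a technicality.

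After your Cauchy--Schwarz over words, every variable is integrated against a singular measure: $x,y$ against $\mu$, and the word variable through the distribution of $\lambda_{\mathbf a}$. Now $\lambda_{\mathbf a}=1/(x_0-\gamma_{\mathbf a}^{-1}(\infty))$, and $\gamma_{\mathbf a}^{-1}(\infty)$ lies in $I_{\overline{\mathbf a}}$, an interval of length $\asymp\tau$. So the relevant resolution is $\tau$, not $\tau^{1/2}$. At that resolution, the weighted distribution of the $\lambda_{\mathbf a}$ is a smooth image of the Patterson--Sullivan measure of $\Gamma^T$, i.e.\ of $J_*\mu$.

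Consequently there is no integration by parts available in $\lambda$. Your claim that $\sum_{\mathbf a}\tau^\delta e^{i\xi\tau\Phi(\lambda_{\mathbf a})}$ decays because the $\lambda_{\mathbf a}$ satisfy a $\delta$-Frostman bound is false in general. Frostman, or even Ahlfors, regularity gives no Fourier decay at all: pull back the middle-thirds Cantor measure by $\Phi$.

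Your fallback of smoothing at scale $\sigma$ does not help either. Let $N$ be the size of the $\lambda$-derivative of the phase, about $\xi\tau\abs{x-y}$. Mollifying is harmless only if $N\sigma\ll 1$, while integrating by parts against a density that is merely regular at scale $\sigma$ gains at most $(N\sigma)^{-1}$. The two requirements are incompatible.

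For the actual distribution of $\lambda_{\mathbf a}$, the decay you need is a Fourier-decay statement for a measure of the same type as $\mu$, so the argument is circular. This is exactly why the Cauchy--Schwarz-over-words route of Bourgain--Dyatlov and its successors needs sum-product or renewal input. Step 3 inherits the gap: $\alpha,\beta,\gamma$ are never determined, so the exponent is guessed rather than derived.

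\textbf{The missing idea} is to manufacture a Lebesgue variable and to use the transpose group only for counting, never for cancellation. The paper does not expand over one word at two points. It keeps the word sum inside $f(x)=\sum_{\mathbf a\in\mathcal W_\tau}e^{-2\pi i\xi\gamma_{\mathbf a'}(x)}w_{\mathbf a'}(x)$ and proves $\int_{I_b}\abs{f}\deriv\mu\lesssim\eta^{(\delta-1)/2}\norm{f}_{L^2(U_b,\Leb)}+\eta\norm{f'}_\infty$. This comes from mollifying $\mu$ at scale $\eta$ (density $\lesssim\eta^{\delta-1}$) and splitting along the level sets of $\abs{f}$.

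Expanding $\norm{f}_{L^2(\Leb)}^2$ then pairs two words at one point. The derivative $(\gamma_{\mathbf a'}-\gamma_{\mathbf b'})'$ is a factor of size $\tau^{3/2}$ times the linear function $(c_{\mathbf a'}-c_{\mathbf b'})x+(d_{\mathbf a'}-d_{\mathbf b'})$, on the region where the complementary sum factor is not small, and symmetrically on the other region. Genuine stationary and non-stationary phase in $x$ against Lebesgue measure then gives bounds $\lesssim\tau^{2\delta-3/4}(\abs{\xi}\abs{c_{\mathbf a'}-c_{\mathbf b'}})^{-1/2}$ or $\lesssim\tau^{2\delta-3/2}(\abs{\xi}\abs{d_{\mathbf a'}-d_{\mathbf b'}})^{-1}$.

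The transpose group enters only through $c_{\mathbf a}/d_{\mathbf a}=\gamma_{\mathbf a}^T(0)$ and Ahlfors regularity of $J_*\mu$. These give the count $\#\{\mathbf b:\abs{c_{\mathbf a'}-c_{\mathbf b'}}+\abs{d_{\mathbf a'}-d_{\mathbf b'}}\le\tau^{-1/2}\sigma\}\lesssim(\sigma/\tau)^\delta$.

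The hypothesis $\delta>1/2$ is used to sum $\sum_j2^{(1/2-\delta)j}$ over dyadic shells, giving $\norm{f}_{L^2(\Leb)}^2\lesssim\abs{\xi}^{-1/2}\tau^{-1/2}+\abs{\xi}^{-1}\tau^{-1}+\tau^\delta$. Combined with $\norm{f'}_\infty\lesssim1+\tau\abs{\xi}$ and the choices $\tau=\abs{\xi}^{-1/(2\delta+1)}$ and $\eta=\abs{\xi}^{-5\delta/((3-\delta)(2\delta+1))}$, this yields the stated exponent.
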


Our method extends to the Patterson-Sullivan setting the oscillatory integral approach introduced by Kaufman \cite{Kaufman} and subsequently developed by Queff\'elec-Ramar\'e \cite{QueffelecRamare} and by Jordan and the second author \cite{JS} for Gibbs measures of the Gauss map. For this a key ingredient is a ``non-concentration estimate" which appears in the work of Bourgain-Dyatlov \cite{BD}. In a different guise, for the Gauss map, this is a form of the so-called uniform non-integrability condition as it appears in notes of Naud \cite{Naud}. We can think of it as quantifying the non-linearity of the system. Moreover, we rewrite this argument in terms of the transpose of the original Schottky group in the spirit of the ``dual IFS" \cite{BKT}.

\begin{rems} 
\begin{itemize}
\item[(1)] The restriction $\delta>1/2$ arises from our approximation procedure and oscillatory integrals at scale $\abs{\xi}^{-1/2}$. The method does not seem to work below $1/2$.

\item[(2)] Our decay exponent $\frac{\delta(2\delta-1)}{(2\delta+1)(3-\delta)}$ differs slightly from the exponent $\frac{\delta(2\delta-1)}{(2\delta+1)(4-\delta)}$ obtained by Queff\'elec-Ramar\'e \cite{QueffelecRamare}, solely through our sharper approximation lemma.

\item[(3)] In \cite{DyatlovZahl,BD}, it is shown Fourier decay $|\widehat{\mu}(\xi)|\lesssim |\xi|^{-\alpha}$ for PS measures implies an improvement for the essential spectral gap $\frac12-\delta$ of the Laplacian by $\frac{\alpha}{4}$. Thus optimal decay (i.e. PS measures are Salem) would only yield the gap $\frac12-\delta+\frac{\delta}{8}$ for $\delta \in [\frac{1}{2},\frac{1}{2} + \frac{1}{14}]$ \cite{BD}, whereas the Jakobson-Naud conjecture \cite[Conjecture 2]{JakobsonNaud} predicts the larger gap $\frac12-\delta+\frac{\delta}{2}$. Our exponent is not useful even when $\delta\approx\frac12$ as $\frac12-\delta+\frac14\cdot\frac{\delta(2\delta-1)}{(2\delta+1)(3-\delta)}<0$ but improvement of our exponent gives an essential spectral gap bound near $\frac{1}{2}$

\item[(4)] Our argument should extend to geometrically finite groups with \textit{parabolic} elements, where $\delta\ge\frac12$. Power Fourier decay was proved in this setting in \cite{LeclercPaukkonenSahlsten} using sum-product theory. Curiously, Fraser showed that sufficiently large parabolic rank bounds the Fourier dimension from above \cite{Fraser} and prevents these PS measures from being Salem.

\item[(5)] Our method also extends to Ahlfors-regular self-conformal measures satisfying a suitable non-linearity condition on the dual IFS, provided the dimension exceeds a threshold depending on the dual IFS. It also applies to perturbations of the systems considered here, but it is unclear whether it extends significantly further.
\end{itemize}
\end{rems}

\section{Preliminaries}
 \subsection{Hyperbolic plane}
 Let $\mathbb{H}^2 := \{z\in\C\,:\,\mathrm{Im}\,z > 0\}$ be the upper half-plane with its standard hyperbolic metric $\frac{1}{y^2}\left(\deriv x^2 + \deriv y^2\right)$. The group $\mathrm{PSL}_2(\R)$ acts isometrically on $\mathbb{H}^2 $ by Möbius transforms: for any \[\gamma = \left({\begin{array}{cc}
   a & b \\
   c & d \\
  \end{array} } \right),\] and $z\in \mathbb{H}^2$, we let \[\gamma(z) := \frac{az + b}{cz + d},\] and this action extends to the boundary $\partial\mathbb{H}^2 := \R\cup\{\infty\}$. Let us also write $\overline{\mathbb{H}^2} = \mathbb{H}^2 \cup \partial\mathbb{H}^2$.
\subsection{Schottky subgroups} \label{subsect:schottky}

In this subsection, we set up the notation on Schottky groups and coding, following \cite[\S2]{BD}, to which we refer for more details and references. Let $r \geq 2$ and fix non-intersecting closed hyperbolic half-disks $D_1, \ldots, D_{2r}$, centered on the real line. Let $\mathcal{A} := \{1, 2, \ldots, 2r\}$ and define for every $a \in \mathcal{A}$,
$\overline{a} := a - r$ if $a > r$, and $\overline{a} := a + r$ otherwise. Fix transformations $\gamma_a \in \mathrm{SL}_2(\R)$, $a \in \mathcal{A}$ such that for any $a \in \mathcal{A}$
\begin{equation}
\gamma_a\left(\overline{\mathbb{H}^2} \setminus D_{\overline{a}}^\circ\right) = D_a,
\label{eq:schottky}
\end{equation}
\[\gamma_{\overline{a}} = \gamma_a^{-1}.\]
(We have written $A^\circ$ for the interior of a subset $A$.) 

Let $\Gamma$ be the group generated by $\gamma_1, \ldots, \gamma_r$, which is free by the condition above and a ping-pong argument. Any convex-cocompact subgroup of $\mathrm{SL}_2(\R)$ is of this form, up to conjugation.

Moreover, up to conjugation, we can also make the following assumption:
\begin{assumption}
None of the disks $D_a$, $a \in \mathcal{A}$, contain $0 \in \R$.
\end{assumption}

\begin{rem}In what follows, we will frequently write $C = C(\Gamma)$ for a constant, possibly varying each time, which is allowed to depend on $\Gamma$. It will also be allowed to depend on the Schottky data $D_a, \gamma_a$ for $a = 1, \ldots, 2r$. Similarly, the notation $a \lesssim b$ will mean $a \leq Cb$ for some $C = C(\Gamma) > 0$ which depends only on $\Gamma$ and the Schottky data, and $a \asymp b$ means $a \lesssim b$ and $b\lesssim a$.
\end{rem}

\subsection{Action on the real line and limit set.}
Let $\mathbf{a} = a_1a_2\cdots a_n$ be a word on the alphabet $\mathcal{A}$. Call a word \emph{reduced} if for any $j = 1, \ldots, n - 1$, $a_j \neq \overline{a_{j+1}}$. Let $\mathcal{W}$ be the set of reduced words of any positive length, and for any $n\geq 1$ let $\mathcal{W}^n$ be the set of words of length $n$. For any word $\mathbf{a}$, we denote its length by $\abs{\mathbf{a}}$. Moreover, we write $\mathbf{a}' := a_1\cdots a_{n - 1}$. For any letter $b \in \mathcal{A}$, we write $\mathbf{a} \rightsquigarrow b$ if the last letter of $\mathbf{a}$ is $b$. Let us also write
\[\gamma_\mathbf{a} = \gamma_{a_1} \circ \gamma_{a_2} \circ \cdots \circ \gamma_{a_n},\]
and as a matrix, let $a_\mathbf{a}, b_\mathbf{a}, c_\mathbf{a}, d_\mathbf{a} \in \R$ be such that:
\[\gamma_{\mathbf{a}} = \left({\begin{array}{cc}
   a_\mathbf{a} & b_\mathbf{a} \\
   c_\mathbf{a} & d_\mathbf{a} \\
  \end{array} } \right).\]
Let us now look at the action on the boundary. For any $a\in\mathcal{A}$, let
\[I_a := D_a \cap \left(\R\cup\{\infty\}\right) \subset \R.\]
For any word of length at least $2$, $\mathbf{a} = a_1a_2\cdots a_n$, we define the cylinder set $I_\mathbf{a}$
\[I_\mathbf{a} = \gamma_{\mathbf{a}'}(I_{a_n}) = \gamma_{a_1a_2\cdots a_{n - 1}}(I_{a_n}).\]
Note that the intervals are nested: if $\mathbf{a}$ is a prefix of $\mathbf{b}$, then $I_\mathbf{b} \subset I_\mathbf{a}$. This is a consequence of equation \eqref{eq:schottky} and the fact that words are reduced. Moreover, if neither $\mathbf{a}$ nor $\mathbf{b}$ is a prefix of the other, then $I_\mathbf{a}$ and $I_\mathbf{b}$ are disjoint. Therefore, we define the \emph{limit set} as
\[\Lambda_\Gamma := \bigcap_{n \geq 1} \bigcup_{\mathbf{a} \in \mathcal{W}^n} I_\mathbf{a}.\]

\subsection{The associated IFS and bounded distortion}In what follows, we use results of \cite{BD}, but see also Li-Naud-Pan \cite{LNP}, whose treatment is closer to our own and to the techniques in the literature on IFSs, using bounded distortion as in the lemma below.
First, we note that for any $\mathbf{a} \in \mathcal{W}$, we have
\[\gamma_\mathbf{a}(x) = \frac{a_\mathbf{a}x + b_\mathbf{a}}{c_\mathbf{a}x + d_\mathbf{a}},\;
\gamma'_\mathbf{a}(x) = \frac{1}{\left(c_\mathbf{a}x + d_\mathbf{a}\right)^2},\;
\frac{\gamma''_\mathbf{a}}{\gamma'_\mathbf{a}}(x) = -\frac{2c_\mathbf{a}}{c_\mathbf{a}x + d_\mathbf{a}}.\]
\begin{notation}
For any $b \in \mathcal{A}$, let $U_b := I_b + (-\eta_0, \eta_0)$, where $\eta_0 > 0$ is a small constant depending only on $\Gamma$, such that the for any distinct $b, c \in \mathcal{A}$, $U_b$ and $U_c$ have positive distance, which we can choose to depend only on the Schottky data.
\end{notation}
\begin{lem}[Bounded distortion]
 There is $C = C(\Gamma)> 0$ such that for any $b \in \mathcal{A}$, $\mathbf{a} \in \mathcal{W}$ with $\abs{\mathbf{a}} \geq 2$ and $\mathbf{a} \rightsquigarrow b$, and $x \in U_b$, we have:
 \[\abs{\frac{\gamma_{\mathbf{a}'}''(x)}{\gamma_{\mathbf{a}'}'(x)}} \leq C.\]
 \label{lem:bounded-distortion}
\end{lem}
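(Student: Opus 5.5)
We have the explicit formula $\frac{\gamma''_{\mathbf{a}'}}{\gamma'_{\mathbf{a}'}}(x) = -\frac{2c_{\mathbf{a}'}}{c_{\mathbf{a}'}x + d_{\mathbf{a}'}}$. It therefore suffices to show that $\abs{x + d_{\mathbf{a}'}/c_{\mathbf{a}'}}$ is bounded below uniformly for $x\in U_b$. When $c_{\mathbf{a}'}=0$ the quantity vanishes and there is nothing to prove. Otherwise the point $x_0 := -d_{\mathbf{a}'}/c_{\mathbf{a}'}$ is the pole of $\gamma_{\mathbf{a}'}$, i.e. $\gamma_{\mathbf{a}'}^{-1}(\infty)$. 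We then have
\[\abs{\frac{\gamma''_{\mathbf{a}'}}{\gamma'_{\mathbf{a}'}}(x)} = \frac{2}{\abs{x - x_0}},\]
so the whole lemma reduces to a uniform lower bound on $\mathrm{dist}(x_0, U_b)$.

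\textbf{Locating the pole.} Write $\mathbf{a}' = a_1\cdots a_{n-1}$, so that $\gamma_{\mathbf{a}'}^{-1} = \gamma_{\overline{a_{n-1}}}\circ\cdots\circ\gamma_{\overline{a_1}}$. By the Assumption, $0\notin \bigcup_a D_a$. Applying the Schottky relation \eqref{eq:schottky} with $\gamma_{\overline{a}}$ sending everything outside $D_a^\circ$ into $D_{\overline a}$, a point outside all disks is mapped by $\gamma_{\overline{a_1}}$ into $I_{\overline{a_1}}$.

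We should also arrange that $\infty$ lies outside all the $D_a$. This is automatic, since the $D_a$ are half-disks centered on $\R$ and hence bounded; the $I_a$ are compact intervals in $\R$. So $\gamma_{\overline{a_1}}(\infty)\in I_{\overline{a_1}}$.

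Next, $\overline{a_2}\ne a_1$ follows from reducedness ($a_1\neq\overline{a_2}$). Hence $I_{\overline{a_1}}$ lies outside $D_{a_2}$, and $\gamma_{\overline{a_2}}$ maps it into $I_{\overline{a_2}}$. Inductively, the ping-pong argument gives
\[x_0 = \gamma_{\mathbf{a}'}^{-1}(\infty)\in I_{\overline{a_{n-1}}}.\]

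\textbf{Separation.} Since $\mathbf{a}$ is reduced, $a_{n-1}\neq \overline{a_n} = \overline{b}$, so $\overline{a_{n-1}}\neq b$. By the choice of $\eta_0$ in the Notation, $U_{\overline{a_{n-1}}}\supset I_{\overline{a_{n-1}}}$ and $U_b$ are at distance at least some $\rho=\rho(\Gamma)>0$. Hence $\abs{x-x_0}\ge \rho$ for all $x\in U_b$, and the lemma holds with $C = 2/\rho$.

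The only delicate point is the ping-pong step locating $\gamma_{\mathbf{a}'}^{-1}(\infty)$ in $I_{\overline{a_{n-1}}}$. This requires checking that the boundary intersections behave as claimed, in particular that $\infty\notin D_a$ for every $a$. It also requires care in the case $n=2$, where $\mathbf{a}'=a_1$ is a single letter; the same argument covers it.
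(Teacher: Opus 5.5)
Your proof is correct and follows the paper's argument. Both write $\gamma''_{\mathbf{a}'}/\gamma'_{\mathbf{a}'}(x) = -2/(x + d_{\mathbf{a}'}/c_{\mathbf{a}'})$, locate the pole $\gamma_{\mathbf{a}'}^{-1}(\infty)$ by ping-pong in the interval indexed by the inverse of the last letter of $\mathbf{a}'$, and then use reducedness of $\mathbf{a}$ and the positive separation of the $U_c$ to bound the denominator below; your indexing and your check that $\infty\notin D_{a_1}$ (where the paper writes $\infty\notin I_{\overline{a_1}}$) are, if anything, slightly more careful.
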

\begin{proof}
This follows from \cite[Lemma 2.5]{BD}, but we can also deduce it directly. Writing $\mathbf{a}' = a_1\cdots a_n$, note that
\[-\frac{d_{\mathbf{a}'}}{c_{\mathbf{a}'}} = \gamma_{\mathbf{a}'}^{-1}(\infty) = \gamma_{\overline{a_n}\cdots\overline{a_1}}(\infty).\]
Then since $\infty \notin I_{\overline{a_1}}$, we have $\gamma_{\overline{a_n}\cdots\overline{a_1}}(\infty) \in I_{\overline{a_n}}$.
Therefore, we have
\begin{align*}
\frac{\gamma_{\mathbf{a}'}''(x)}{\gamma_{\mathbf{a}'}'(x)} 
= \frac{-2c_{\mathbf{a}'}}{c_{\mathbf{a}'}x + d_{\mathbf{a}'}}
= \frac{-2}{x  + \frac{d_{\mathbf{a}'}}{c_{\mathbf{a}'}}}
\end{align*}and $-\frac{d_{\mathbf{a}'}}{c_{\mathbf{a}'}} \in I_{\overline{a_{n}}}$ while $x \in U_b$, and $b \neq \overline{a_{n}}$, so the denominator is bounded from below by \[\inf\{\abs{x - y}, x \in U_b, y \in I_c, b, c \in \mathcal{A}, b\neq c\},\]which is positive and depends only on the Schottky data.
\end{proof}
For any bounded interval $I$, denote by $\abs{I}$ its size. It is proved in \cite{BD} that there is a constant $\kappa = \kappa(\Gamma) \in (0,1)$ such that for any words $\mathbf{a},\mathbf{b} \in\mathcal{W}$ such that $\mathbf{a}$ is a prefix of $\mathbf{b}$, we have
\begin{equation}\label{eq:contraction}\abs{I_\mathbf{b}} \leq \kappa^{\abs{\mathbf{b}}-\abs{\mathbf{a}}}\abs{I_\mathbf{a}}.\end{equation}
Therefore, the set $\Lambda_\Gamma$ is a hyperbolic Cantor set.

Moreover, by bounded distortion (lemma \ref{lem:bounded-distortion}) and the mean-value theorem, we get the following:
\begin{lem}\label{lem:contraction}
There exists $C = C(\Gamma) > 0$ such that for any $b \in \mathcal{A}$, $\mathbf{a} \in \mathcal{W}$ with $\mathbf{a} \rightsquigarrow b$ and $x \in U_b$, we have
\[C^{-1}\abs{I_\mathbf{a}} \leq \abs{\gamma'_{\mathbf{a}'}(x)} \leq C\abs{I_\mathbf{a}}.\]
Therefore we deduce that $\gamma'_\mathbf{a}$ decreases exponentially with $\abs{\mathbf{a}}$ on $U_b$.
\end{lem}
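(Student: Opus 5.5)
The idea is to compare $\abs{\gamma'_{\mathbf{a}'}(x)}$ with the length $\abs{I_\mathbf{a}} = \abs{\gamma_{\mathbf{a}'}(I_b)}$ via the mean value theorem. The comparison constant comes from Lemma \ref{lem:bounded-distortion}, which controls how much $\log\abs{\gamma'_{\mathbf{a}'}}$ can vary across $U_b$. The case $\abs{\mathbf{a}} = 1$ is trivial: $\mathbf{a}'$ is empty, $\gamma_{\mathbf{a}'} = \mathrm{id}$, and $\abs{I_b}$ ranges over finitely many positive values. So assume $\abs{\mathbf{a}}\geq 2$.

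First, $\gamma_{\mathbf{a}'}$ is a M\"obius map whose pole $-d_{\mathbf{a}'}/c_{\mathbf{a}'}$ lies in $I_{\overline{a_{n-1}}}$, as shown in the proof of Lemma \ref{lem:bounded-distortion}. Since $b\neq\overline{a_{n-1}}$, this pole lies at positive distance from $U_b$. Hence $\gamma_{\mathbf{a}'}$ is smooth on $U_b$, and $\gamma'_{\mathbf{a}'} = (c_{\mathbf{a}'}x+d_{\mathbf{a}'})^{-2}$ is nonvanishing and of constant sign there.

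Now let $x,y\in U_b$. The set $U_b$ is an interval of length at most $\abs{I_b}+2\eta_0$, which is bounded by a constant depending only on the Schottky data. Integrating the bound $\abs{(\log\abs{\gamma'_{\mathbf{a}'}})'} = \abs{\gamma''_{\mathbf{a}'}/\gamma'_{\mathbf{a}'}}\leq C$ from Lemma \ref{lem:bounded-distortion} along the segment from $x$ to $y$ gives
\[
e^{-C\abs{U_b}}\leq \frac{\abs{\gamma'_{\mathbf{a}'}(x)}}{\abs{\gamma'_{\mathbf{a}'}(y)}}\leq e^{C\abs{U_b}} .
\]
By the mean value theorem there is $\xi\in I_b$ with $\abs{I_\mathbf{a}} = \abs{\gamma'_{\mathbf{a}'}(\xi)}\,\abs{I_b}$. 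Combining this with the previous comparison, and using that $\abs{I_b}$ takes only finitely many positive values, gives $\abs{\gamma'_{\mathbf{a}'}(x)}\asymp\abs{I_\mathbf{a}}$ uniformly for $x\in U_b$.

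For the final assertion, apply \eqref{eq:contraction} with the prefix $a_1$ to get $\abs{I_\mathbf{a}}\leq\kappa^{\abs{\mathbf{a}}-1}\max_c\abs{I_c}$. This yields exponential decay of $\gamma'_{\mathbf{a}'}$ on $U_b$. For $\gamma'_\mathbf{a}$ itself, apply the same statement to a reduced extension $\mathbf{a}c$ with $c\neq\overline{b}$: evaluating at $x\in U_c$ shows $\abs{\gamma'_\mathbf{a}(x)}\lesssim \abs{I_{\mathbf{a}c}}$, which again decays exponentially in $\abs{\mathbf{a}}$.

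The only point needing care is that the pole of $\gamma_{\mathbf{a}'}$ stays away from all of $U_b$. This is guaranteed by the choice of $\eta_0$ in the Notation; the rest is a routine mean value theorem argument.
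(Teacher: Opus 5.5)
Your proof is correct and takes the same route as the paper, which proves the lemma only by citing bounded distortion (Lemma \ref{lem:bounded-distortion}) and the mean value theorem. You supply the omitted details correctly: the pole of $\gamma_{\mathbf{a}'}$ lies in $I_{\overline{a_{n-1}}}$, away from $U_b$; the log-derivative bound integrates over the interval $U_b$; the mean value theorem applies on $I_b$; and the case $\abs{\mathbf{a}}=1$ is trivial. For the closing remark about $\gamma'_\mathbf{a}$ on $U_b$ itself, you may take $c=b$ in your extension $\mathbf{a}c$, which is reduced since $b\neq\overline{b}$.
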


Let us also record the following consequence:
\begin{lem}[Quasi-Bernoulli property]
\label{lem:quasi-bernoulli}
There is a constant $C = C(\Gamma) > 0$ such that for every two words $\mathbf{a}, \mathbf{b} \in \mathcal{W}$ such that $\mathbf{a}\mathbf{b} \in \mathcal{W}$, we have:
\[C^{-1}\abs{I_\mathbf{a}}\abs{I_\mathbf{b}} \leq \abs{I_{\mathbf{a}\mathbf{b}}} \leq C\abs{I_\mathbf{a}}\abs{I_\mathbf{b}}.\]
In particular, there is $C = C(\Gamma) > 0$ such that for every $\mathbf{a} \in \mathcal{W}$ with $\mathbf{a}'$ non-empty:
\[C^{-1}\abs{I_\mathbf{a}} \leq \abs{I_{\mathbf{a}'}} \leq C\abs{I_\mathbf{a}}.\]
\end{lem}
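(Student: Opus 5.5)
The plan is to derive the quasi-Bernoulli property from the chain rule together with Lemma \ref{lem:contraction}. Write $\mathbf{a} = a_1\cdots a_n$ and $\mathbf{b} = b_1\cdots b_m$, and suppose $\mathbf{b} \rightsquigarrow c$. By definition
\[
I_{\mathbf{a}\mathbf{b}} = \gamma_{\mathbf{a}}\left(\gamma_{\mathbf{b}'}(I_{c})\right) = \gamma_{\mathbf{a}}(I_{\mathbf{b}}).
\]
If $|\mathbf{b}| = 1$, then $\mathbf{b}'$ is empty and $I_\mathbf{b} = I_c$. The mean value theorem gives $|I_{\mathbf{a}\mathbf{b}}| = |\gamma_\mathbf{a}'(\xi)|\,|I_\mathbf{b}|$ for some $\xi \in I_\mathbf{b}$. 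It remains to show $|\gamma'_\mathbf{a}(\xi)| \asymp |I_\mathbf{a}|$ uniformly for $\xi \in I_\mathbf{b} \subset I_{b_1}$.

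To get this comparison, note that $\mathbf{a} = (\mathbf{a}b_1)'$ and $\mathbf{a}b_1 \rightsquigarrow b_1$, while $I_{b_1}\subset U_{b_1}$. So Lemma \ref{lem:contraction}, applied to the word $\mathbf{a}b_1$, gives
\[
|\gamma_\mathbf{a}'(\xi)| \asymp |I_{\mathbf{a}b_1}|.
\]
We therefore need $|I_{\mathbf{a}b_1}| \asymp |I_\mathbf{a}|$, which is the "in particular" statement. I would prove this comparability of parent and child cylinders first, as a separate step.

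For that step, take $\mathbf{w} = \mathbf{a}b$ with $\mathbf{a} \rightsquigarrow a_n$ and $|\mathbf{a}| \ge 2$. Then
\[
I_{\mathbf{w}} = \gamma_{\mathbf{a}'}(\gamma_{a_n}(I_b)), \qquad I_\mathbf{a} = \gamma_{\mathbf{a}'}(I_{a_n}).
\]
Both sets $\gamma_{a_n}(I_b)$ and $I_{a_n}$ lie in $I_{a_n}\subset U_{a_n}$, where Lemma \ref{lem:contraction} (for the word $\mathbf{a}$) shows that $|\gamma'_{\mathbf{a}'}|$ is comparable to $|I_\mathbf{a}|$. The mean value theorem then gives
\[
|I_\mathbf{w}| \asymp |I_\mathbf{a}|\,|\gamma_{a_n}(I_b)|.
\]
The factor $|\gamma_{a_n}(I_b)|$ ranges over finitely many positive numbers, so it is bounded above and below by constants depending only on $\Gamma$. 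The base case $|\mathbf{a}|=1$ involves only finitely many intervals, so it is immediate.

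For general $|\mathbf{b}|\ge 2$, write
\[
I_{\mathbf{a}\mathbf{b}} = \gamma_\mathbf{a}(I_\mathbf{b}), \qquad I_\mathbf{b} \subset I_{b_1} \subset U_{b_1}.
\]
On $U_{b_1}$, the estimate above gives $|\gamma'_\mathbf{a}| \asymp |I_{\mathbf{a}b_1}| \asymp |I_\mathbf{a}|$. The mean value theorem then yields $|I_{\mathbf{a}\mathbf{b}}| \asymp |I_\mathbf{a}||I_\mathbf{b}|$ with a constant depending only on $\Gamma$.

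The main obstacle is to track which letter is adjacent when Lemma \ref{lem:contraction} is invoked, since the lemma concerns $\gamma'_{\mathbf{w}'}$ evaluated on $U_{\text{last letter of } \mathbf{w}}$. This is why the argument first appends $b_1$ to $\mathbf{a}$ and then uses the one-letter comparability. The case where $\mathbf{a}$ is a single letter needs a small separate remark: there the statement concerns finitely many maps $\gamma_a$, each with derivative bounded above and below on $U_b$ for $b\ne \overline a$, so the bound is immediate.
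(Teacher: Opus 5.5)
Your proof is correct and follows the same route as the paper: the mean value theorem for $\gamma_{\mathbf{a}}$ on $I_{\mathbf{b}}\subset I_{b_1}$, combined with Lemma~\ref{lem:contraction} to compare $\abs{\gamma_{\mathbf{a}}'}$ with $\abs{I_{\mathbf{a}}}$. The only difference is that you first prove the parent--child comparability $\abs{I_{\mathbf{a}b_1}}\asymp\abs{I_{\mathbf{a}}}$ on its own and use it to absorb the one-letter index shift (Lemma~\ref{lem:contraction} applied to $\mathbf{a}b_1$ gives $\abs{I_{\mathbf{a}b_1}}$, not $\abs{I_{\mathbf{a}}}$); the paper's one-line proof glosses over this step and deduces the ``in particular'' clause only afterwards as a corollary, so your ordering is the more careful one.
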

\begin{proof}
By definition, we have $I_{\mathbf{a}\mathbf{b}} = \gamma_\mathbf{a}(I_\mathbf{b})$, so by the mean value theorem, there is $x\in I_\mathbf{b}$ such that $\abs{I_{\mathbf{a}\mathbf{b}}} = \abs{\gamma_\mathbf{a}'(x)} \cdot \abs{I_\mathbf{b}}$. But because $x \in I_b$, where $b$ is the first letter of $\mathbf{b}$, and $\mathbf{a} \rightsquigarrow b$, we have $\abs{\gamma_\mathbf{a}'(x)} \asymp \abs{I_\mathbf{a}}$. This proves the result.
\end{proof}
\subsection{Patterson-Sullivan measure}
We let $\delta \in (0,1)$ be the critical exponent of the Poincaré series of $\Gamma$ and $\mu$ the Patterson-Sullivan measure for $\Gamma$ (centered at $i \in \mathbb{H}^2$). The measure $\mu$ is a measure on $\R$ with support $\Lambda_\Gamma$ and which satisfies, for every bounded Borel function $f$ and $\gamma \in \Gamma$:
\begin{equation}\int_{\Lambda_\Gamma} f\deriv \mu = \int_{\Lambda_\Gamma} f(\gamma(x))\abs{\gamma'(x)}_{\mathbb{S}^1}^\delta\deriv\mu(x),
\label{eq:equivariance}
\end{equation}
where \[\abs{\gamma'(x)}_{\mathbb{S}^1} = \frac{1 + x^2}{1 + \gamma(x)^2}\abs{\gamma'(x)}.\]

The measure $\mu$ is Ahlfors regular of exponent $\delta$ (for a proof see \cite[lemma 2.11 and 2.12]{BD}):
\begin{lem}[Ahlfors regularity of the Patterson-Sullivan measure]
\label{lem:ahlfors-regularity}
There is a constant $C = C(\Gamma) >0$ such that for any interval $J$ of radius $r > 0$, we have
\[\mu(J) \leq Cr^\delta,\]
and moreover if $J$ is centered at a point in $\Lambda_\Gamma$, we have
\[\mu(J) \geq C^{-1}r^\delta,\]
as well as
\[C^{-1}\abs{I_\mathbf{a}}^\delta \leq \mu(I_\mathbf{a}) \leq C\abs{I_\mathbf{a}}^\delta.\]
\end{lem}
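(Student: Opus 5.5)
Since \cite[Lemmas 2.11 and 2.12]{BD} already contains the statement, I would only reprove it from the Schottky coding, the equivariance \eqref{eq:equivariance} and the distortion estimates above. The organising fact is the cylinder estimate $\mu(I_\mathbf{a})\asymp\abs{I_\mathbf{a}}^\delta$, and I would prove it first.

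\textbf{Cylinder estimate.} Write $\mathbf{a}=\mathbf{a}'b$ with $\mathbf{a}\rightsquigarrow b$. Then $I_\mathbf{a}=\gamma_{\mathbf{a}'}(I_b)$, and since $\Lambda_\Gamma\cap I_\mathbf{a}=\gamma_{\mathbf{a}'}(\Lambda_\Gamma\cap I_b)$, applying \eqref{eq:equivariance} with $f=\mathbf{1}_{I_\mathbf{a}}$ and $\gamma=\gamma_{\mathbf{a}'}$ gives
\[\mu(I_\mathbf{a})=\int_{I_b}\abs{\gamma_{\mathbf{a}'}'(x)}_{\mathbb{S}^1}^\delta\,\deriv\mu(x).\]
All intervals $I_c$ are bounded subsets of $\R$ lying in a fixed compact set, so the spherical correction factor $\frac{1+x^2}{1+\gamma(x)^2}$ is bounded above and below for $x\in I_b$ and $\gamma(x)\in I_{a_1}$. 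Lemma \ref{lem:contraction} gives $\abs{\gamma'_{\mathbf{a}'}(x)}\asymp\abs{I_\mathbf{a}}$ on $I_b$. Finally $\mu(I_b)\asymp1$: the intervals $I_c$ cover $\Lambda_\Gamma$, and each has positive measure, because $\Lambda_\Gamma\cap I_c$ contains the image of a set of positive measure under some group element and \eqref{eq:equivariance} transfers positivity. Putting these together yields $\mu(I_\mathbf{a})\asymp\abs{I_\mathbf{a}}^\delta$.

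\textbf{Ball estimates via a cylinder cover.} Let $J$ be an interval of radius $r$, with $r$ small (large $r$ is trivial since $\mu$ is a probability measure). For every $x\in\Lambda_\Gamma\cap J$, take the shortest prefix $\mathbf{a}$ of the coding of $x$ with $\abs{I_\mathbf{a}}\le r$. Then $\abs{I_{\mathbf{a}'}}>r$, and Lemma \ref{lem:quasi-bernoulli} gives $\abs{I_\mathbf{a}}\asymp r$. The collection of such cylinders is a disjoint family (stopping time), and each member meets $J$, so each lies inside the enlarged interval $3J$. Its members are disjoint intervals of length $\gtrsim r$ inside an interval of length $\lesssim r$, so there are boundedly many of them. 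Summing the cylinder estimate over this family gives $\mu(J)\lesssim r^\delta$.

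For the lower bound, suppose $J$ is centred at $x\in\Lambda_\Gamma$. Choose the cylinder $I_\mathbf{a}\ni x$ with $\abs{I_\mathbf{a}}\le r$ and $\abs{I_{\mathbf{a}'}}>r$. Then $I_\mathbf{a}\subset J$, because it contains the centre and has length at most $r$. Hence $\mu(J)\ge\mu(I_\mathbf{a})\gtrsim r^\delta$.

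The main obstacle is making the constants uniform. This requires that the spherical derivative be comparable to the Euclidean one, that the single-letter masses $\mu(I_b)$ be bounded below, and that the multiplicity count be uniform. These steps rely on the fixed, finitely many Schottky intervals being bounded away from $\infty$, which I take to be part of the setup via $I_a\subset\R$.
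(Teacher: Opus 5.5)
Your proof is correct. The paper gives no argument of its own here and simply defers to \cite[Lemmas 2.11 and 2.12]{BD}. What you wrote is the standard argument behind those lemmas, so it is essentially the same route, but made self-contained using only facts the paper has already established. You first derive the cylinder estimate from \eqref{eq:equivariance}, Lemma \ref{lem:contraction} and $\min_b\mu(I_b)>0$. You then cover $J$ by boundedly many disjoint stopping-time cylinders of length $\asymp r$, and for the lower bound you place a single such cylinder inside $J$.

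Two small corrections:
\begin{enumerate}
\item Your remark that large $r$ is trivial applies only to the upper bound. The lower bound $\mu(J)\ge C^{-1}r^\delta$ is actually false for large $r$, since $\mu(J)\le 1$, so it must be read for $r\le 1$ (``regular up to scale $1$'' in \cite{BD}). On that range your small-$r$ argument, together with monotonicity of $\mu(J)$ in $r$, covers the intermediate scales.
\item If $J$ is open, choose the cylinder with $\abs{I_\mathbf{a}}\le r/2$ rather than $\abs{I_\mathbf{a}}\le r$, so that $I_\mathbf{a}\subset J$ genuinely holds.
\end{enumerate}
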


\subsection{Stopping time}
The equivariance property \eqref{eq:equivariance} implies that for any bounded Borel function $f$, we have for any $b \in \mathcal{A}$, and $\mathbf{a}$ with $\mathbf{a} \rightsquigarrow b$:
\begin{equation}
\int_{I_\mathbf{a}} f(x)\deriv\mu(x)
= \int_{I_b} f(\gamma_{\mathbf{a}'}(x))w_{\mathbf{a}'}(x)\deriv \mu(x),
\label{eq:equivariance2}
\end{equation}
where we write
\[w_{\mathbf{a}'}(x) := \abs{\gamma_{\mathbf{a}'}'(x)}_{\mathbb{S}^1}^\delta.\]

Let us say that a set of words $Z$ is a \emph{partition} if any word in $\mathcal{W}$ of large enough length has a unique prefix in $Z$; in other words, $Z$ is a partition if
\[\Lambda_\Gamma = \bigsqcup_{\mathbf{a} \in Z} I_{\mathbf{a}},\]
the union being disjoint. We think of partitions as formalising the concept of stopping times: for a partition $Z$, formula \eqref{eq:equivariance2} above implies that for any bounded Borel function $f$, we have
\[\int f(x)\deriv\mu(x) = \sum_{b \in \mathcal{A}} \int_{I_b} \sum_{\mathbf{a} \in Z \atop \mathbf{a} \rightsquigarrow b} f(\gamma_{\mathbf{a}'}(x))w_{\mathbf{a}'}(x)\deriv \mu(x).\]

\begin{defn}
For any $\tau > 0$, define $\mathcal{W}_\tau$ to be the set of non-empty words such that $\abs{I_\mathbf{a}} \leq \tau$; and no prefix of $\mathbf{a}$ satisfies this property. (Here we set $\abs{I_{\varnothing}} = \infty$.)
\end{defn}
\begin{lem}
There is $C = C(\Gamma) > 0$ such that for any $\tau > 0$ small enough, the set $\mathcal{W}_\tau$ is a partition; we have 
$C^{-1}\tau \leq \abs{I_\mathbf{a}} \leq C\tau$, and
\[\#\mathcal{W}_\tau \leq C\tau^{-\delta},\]
and for any $b \in \mathcal{A}$, $x \in U_b$, and $\mathbf{a} \in W_\tau$ with $\mathbf{a} \rightsquigarrow b$, we have
\[C^{-1}\tau \leq \abs{\gamma'_{\mathbf{a}'}(x)} \leq C\tau,\]
and therefore also $C^{-1}\tau^{-1/2} \leq \abs{c_{\mathbf{a}'}x + d_{\mathbf{a}'}} \leq C\tau^{-1/2}$.
\end{lem}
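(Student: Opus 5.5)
We prove the four assertions in turn, using only the contraction estimate \eqref{eq:contraction}, the quasi-Bernoulli property (Lemma \ref{lem:quasi-bernoulli}), Lemma \ref{lem:contraction} and Ahlfors regularity (Lemma \ref{lem:ahlfors-regularity}).

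\emph{Partition property.} Take $\tau$ smaller than $\min_{a\in\mathcal{A}}\abs{I_a}$, so that no word of length one lies in $\mathcal{W}_\tau$. Let $\mathbf{b}$ be an infinite reduced sequence, coding a point of $\Lambda_\Gamma$. By \eqref{eq:contraction} the lengths $\abs{I_{b_1\cdots b_n}}$ tend to $0$. Hence there is a least $n$ with $\abs{I_{b_1\cdots b_n}}\leq\tau$, and $b_1\cdots b_n$ is the unique prefix of $\mathbf{b}$ lying in $\mathcal{W}_\tau$. Uniqueness holds because two distinct elements of $\mathcal{W}_\tau$ cannot be prefixes of one another, by the minimality clause in the definition. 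A uniform length bound follows from $\abs{I_{b_1\cdots b_n}}\leq\kappa^{n-1}\max_a\abs{I_a}$, which forces $n\lesssim 1+\log(1/\tau)$. Therefore every word of length larger than this bound has a unique prefix in $\mathcal{W}_\tau$, and $Z=\mathcal{W}_\tau$ is a partition. Disjointness of the cylinders $I_\mathbf{a}$, $\mathbf{a}\in\mathcal{W}_\tau$, follows from the prefix-incomparability remark in \S2.3.

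\emph{Size of the cylinders.} For $\mathbf{a}\in\mathcal{W}_\tau$ the upper bound $\abs{I_\mathbf{a}}\leq\tau$ holds by definition. Since $\abs{\mathbf{a}}\geq 2$, the prefix $\mathbf{a}'$ is non-empty, and by minimality $\abs{I_{\mathbf{a}'}}>\tau$. The quasi-Bernoulli property then gives $\abs{I_\mathbf{a}}\geq C^{-1}\abs{I_{\mathbf{a}'}}>C^{-1}\tau$.

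\emph{Counting.} The intervals $I_\mathbf{a}$, $\mathbf{a}\in\mathcal{W}_\tau$, are disjoint and their union is $\Lambda_\Gamma$, and $\mu$ is a probability measure on $\Lambda_\Gamma$. Ahlfors regularity gives $\mu(I_\mathbf{a})\geq C^{-1}\abs{I_\mathbf{a}}^\delta\geq C^{-1}\tau^\delta$. Summing over $\mathbf{a}\in\mathcal{W}_\tau$,
\[1\geq\sum_{\mathbf{a}\in\mathcal{W}_\tau}\mu(I_\mathbf{a})\geq C^{-1}\tau^\delta\,\#\mathcal{W}_\tau,\]
so $\#\mathcal{W}_\tau\leq C\tau^{-\delta}$.

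\emph{Derivative bounds.} Let $\mathbf{a}\in\mathcal{W}_\tau$ with $\mathbf{a}\rightsquigarrow b$ and $x\in U_b$. Lemma \ref{lem:contraction} gives $\abs{\gamma'_{\mathbf{a}'}(x)}\asymp\abs{I_\mathbf{a}}$, and combined with the previous step this yields $\abs{\gamma'_{\mathbf{a}'}(x)}\asymp\tau$. Finally, $\gamma'_{\mathbf{a}'}(x)=(c_{\mathbf{a}'}x+d_{\mathbf{a}'})^{-2}$, so $\abs{c_{\mathbf{a}'}x+d_{\mathbf{a}'}}=\abs{\gamma'_{\mathbf{a}'}(x)}^{-1/2}\asymp\tau^{-1/2}$, with the constant replaced by its square root.

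The only point requiring care is the lower bound in the second step. It relies on $\mathbf{a}'$ being non-empty, which is guaranteed by taking $\tau$ small; this is exactly where the hypothesis ``$\tau$ small enough'' enters. All constants depend only on $\Gamma$ and the Schottky data.
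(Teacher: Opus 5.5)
Your proof is correct and takes the same approach as the paper. Minimality together with the quasi-Bernoulli property gives $\abs{I_\mathbf{a}}\asymp\tau$; Ahlfors regularity summed over the disjoint cylinders bounds $\#\mathcal{W}_\tau$; and Lemma \ref{lem:contraction} gives the derivative bounds. You also spell out the partition property and the need for $\tau<\min_a\abs{I_a}$ so that $\mathbf{a}'$ is non-empty, both of which the paper leaves implicit.
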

\begin{proof}
By definition, $\abs{I_\mathbf{a}} \leq \tau$ and $\abs{I_{\mathbf{a}'}} > \tau$, but by lemma \ref{lem:quasi-bernoulli}, $\abs{I_\mathbf{a}} \asymp \abs{I_{\mathbf{a}'}}$; therefore $\abs{I_\mathbf{a}} \asymp \tau$. Therefore by lemma \ref{lem:ahlfors-regularity}, $\mu(I_\mathbf{a}) \asymp \tau^\delta$. 
Since
\[1 = \sum_{\mathbf{a} \in \mathcal{W}_\tau} \mu(I_\mathbf{a}),\]
we deduce that in fact $\#\mathcal{W}_\tau \asymp \tau^{-\delta}$. The other properties are a consequence of bounded distortion as above.
\end{proof}
\subsection{Some estimates on coefficients.} Here we record a number of simple technical estimates that will be needed below. The proofs are variants of the technique used in the proof of lemma \ref{lem:bounded-distortion}.
\begin{lem}\label{lem:bound-quotient}
There is $C = C(\Gamma) > 0$ such that for any $\mathbf{a} \in\mathcal{W}$, we have:
\[C^{-1} \leq \abs{\frac{c_\mathbf{a}}{d_\mathbf{a}}} \leq C.\]
\end{lem}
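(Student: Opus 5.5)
The key identity is that $-d_\mathbf{a}/c_\mathbf{a}$ is a point of the limit-set geometry, and the Assumption keeps such points a fixed distance from $0$. As in the proof of Lemma \ref{lem:bounded-distortion}, write $\mathbf{a} = a_1\cdots a_n$ and observe that
\[-\frac{d_\mathbf{a}}{c_\mathbf{a}} = \gamma_\mathbf{a}^{-1}(\infty) = \gamma_{\overline{a_n}\cdots\overline{a_1}}(\infty).\]
The reversed word $\overline{a_n}\cdots\overline{a_1}$ is again reduced. Since $\infty \notin I_{\overline{a_1}}$, the Schottky property \eqref{eq:schottky} and the ping-pong nesting place this point in $I_{\overline{a_n}}$. (If one prefers to keep $\infty$ out of the picture, note that $\infty$ lies in the complement of all the disks $D_c$, which is what the ping-pong argument needs.) Hence $-d_\mathbf{a}/c_\mathbf{a} \in \bigcup_{c\in\mathcal{A}} I_c$. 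We must also check that $c_\mathbf{a}\neq 0$; this holds because $\gamma_\mathbf{a}^{-1}(\infty)\neq\infty$, as $\infty$ lies in no $I_c$.

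The inverse quotient then satisfies
\[\abs{\frac{d_\mathbf{a}}{c_\mathbf{a}}} = \abs{\gamma_\mathbf{a}^{-1}(\infty)} \in \Big[\min_{c}\operatorname{dist}(0, I_c),\ \max_c \sup_{y\in I_c}\abs{y}\Big].\]
The lower bound is positive because, by the Assumption, none of the disks $D_c$ contains $0$, so each $I_c$ is a compact interval avoiding $0$. The upper bound is finite because each $I_c$ is a bounded interval; this uses that $\infty$ is not in any $D_c$, which is implicit in the statement that the $I_c \subset \R$ are bounded. Both constants depend only on the Schottky data. Inverting gives $C^{-1}\leq \abs{c_\mathbf{a}/d_\mathbf{a}}\leq C$, and in particular $d_\mathbf{a}\neq 0$.

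The main point needing care is the ping-pong step: that $\gamma_{\overline{a_n}\cdots\overline{a_1}}$ maps $\infty$ into $I_{\overline{a_n}}$. I would argue it by induction on length. We have $\gamma_{\overline{a_1}}(\infty)\in I_{\overline{a_1}}$, since $\infty\notin D_{a_1}^\circ$. Next, $\gamma_{\overline{a_{k}}}$ maps everything outside $D_{a_k}^\circ$ into $D_{\overline{a_k}}$. The previous point lies in $I_{\overline{a_{k-1}}}$, which is disjoint from $D_{a_k}$ because $\overline{a_{k-1}}\neq a_k$ by reducedness. The remaining care is to index correctly: the outermost map $\gamma_{\overline{a_n}}$ is applied last, so the point ends up in $I_{\overline{a_n}}$.
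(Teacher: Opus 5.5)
Your proof is correct and is the same as the paper's: both identify $-d_\mathbf{a}/c_\mathbf{a}=\gamma_\mathbf{a}^{-1}(\infty)=\gamma_{\overline{a_n}\cdots\overline{a_1}}(\infty)\in I_{\overline{a_n}}$, and both conclude from the fact that each $I_c$ is a bounded interval at positive distance from $0$. You also make explicit two things the paper leaves implicit: the ping-pong induction (correctly using $\infty\notin D_{a_1}^\circ$ and reducedness), and the nonvanishing of $c_\mathbf{a}$ and $d_\mathbf{a}$.
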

\begin{proof}
As above, $\gamma_{\overline{a_n}\cdots\overline{a_1}}(\infty) \in I_{\overline{a_n}}$. This proves the result, since $I_{\overline{a_n}}$ is a finite interval which has positive distance from $0$.
\end{proof}
\begin{lem}
\label{lem:bound-c-d}
 There is $C = C(\Gamma) > 0$ such that for any $\tau \in(0,1)$ and $\mathbf{a} \in\mathcal{W}_\tau$, we have:
 \[C^{-1} \tau^{-1/2} \leq \abs{c_\mathbf{a'}}, \abs{d_\mathbf{a'}} \leq C\tau^{-1/2}.\]
\end{lem}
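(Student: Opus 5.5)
The plan is to combine two earlier results. The first is the bound on $\abs{c_{\mathbf{a}'}x + d_{\mathbf{a}'}}$ from the stopping-time lemma. The second is the ratio bound of Lemma \ref{lem:bound-quotient}, applied to the word $\mathbf{a}'$.

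Let $\tau$ be small, let $\mathbf{a}\in\mathcal{W}_\tau$, and let $b$ be its last letter, so $\mathbf{a}\rightsquigarrow b$. The stopping-time lemma gives, for every $x\in U_b$,
\[C^{-1}\tau^{-1/2}\leq \abs{c_{\mathbf{a}'}x+d_{\mathbf{a}'}}\leq C\tau^{-1/2}.\]
When $\mathbf{a}'$ is non-empty, rewrite the middle quantity as $\abs{c_{\mathbf{a}'}}\cdot\abs{x+d_{\mathbf{a}'}/c_{\mathbf{a}'}}$. As in the proof of Lemma \ref{lem:bounded-distortion}, the point $-d_{\mathbf{a}'}/c_{\mathbf{a}'}$ lies in $I_{\overline{a_{n}}}$, where $a_n$ is the last letter of $\mathbf{a}'$. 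Since $x\in U_b$ and $b\neq\overline{a_n}$, the factor $\abs{x+d_{\mathbf{a}'}/c_{\mathbf{a}'}}$ is bounded below by a positive constant depending only on the Schottky data.

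For the matching upper bound on this factor, I use that all the intervals $I_c$ and the neighbourhoods $U_c$ are bounded and finite in number. Hence $\abs{x-y}$ is bounded above uniformly for $x\in U_b$ and $y\in I_{\overline{a_n}}$. So $\abs{x+d_{\mathbf{a}'}/c_{\mathbf{a}'}}\asymp 1$, and therefore $\abs{c_{\mathbf{a}'}}\asymp\tau^{-1/2}$. Lemma \ref{lem:bound-quotient}, applied to $\mathbf{a}'$, gives $\abs{d_{\mathbf{a}'}}\asymp\abs{c_{\mathbf{a}'}}$, so $\abs{d_{\mathbf{a}'}}\asymp\tau^{-1/2}$ as well.

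The remaining case is $\mathbf{a}'$ empty, which is the only delicate point. Then $\gamma_{\mathbf{a}'}$ is the identity, so $c=0$ and the claimed lower bound on $\abs{c_{\mathbf{a}'}}$ fails. However, $\mathbf{a}\in\mathcal{W}_\tau$ forces $\abs{I_{\mathbf{a}'}}>\tau$ and $\abs{I_\mathbf{a}}\leq\tau$. If $\tau$ is smaller than $\min_a\abs{I_a}$, no single letter lies in $\mathcal{W}_\tau$, so $\abs{\mathbf{a}}\geq 2$ and $\mathbf{a}'$ is non-empty.

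For $\tau$ not small (but still less than $1$), only finitely many words $\mathbf{a}$ have $\abs{I_\mathbf{a}}\gtrsim\tau\gtrsim 1$. For those words with $\mathbf{a}'$ non-empty, the bound holds with a larger constant $C$. I would therefore state the lemma for $\tau$ small enough, as in the stopping-time lemma, or adjust constants so that the single-letter case is excluded. This bookkeeping about small $\tau$ is the main, though minor, obstacle; the rest is the quotient trick already used above.
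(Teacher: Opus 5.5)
Your proof is correct and follows the paper's route: combine $|c_{\mathbf{a}'}x+d_{\mathbf{a}'}|\asymp\tau^{-1/2}$ with two-sided control of $d_{\mathbf{a}'}/c_{\mathbf{a}'}$, then transfer to $d_{\mathbf{a}'}$ via Lemma~\ref{lem:bound-quotient}. The only difference is cosmetic: for the upper bound on $|c_{\mathbf{a}'}|$ the paper subtracts the estimate at two points of $I_b$ at distance $\gtrsim 1$, whereas you use the separation lower bound on $|x+d_{\mathbf{a}'}/c_{\mathbf{a}'}|$ from the bounded-distortion proof. Your caveat is also accurate: the lemma needs $\tau$ small enough that $\mathbf{a}'\neq\varnothing$, since otherwise $c_{\mathbf{a}'}=0$, and the statement ``for any $\tau\in(0,1)$'' and the paper's proof pass over this.
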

\begin{proof}
%Write $\mathbf{a} = a_1\cdots a_{n- 1}a_n$. 
Let us start with the upper bound on $\abs{c_{\mathbf{a}'}}$. Choose $x, y\in I_{a_n}$ with $\abs{x - y} \geq C^{-1}$ for some $C = C(\Gamma) > 0$. Then we have
\[\abs{c_{\mathbf{a}'}x + d_{\mathbf{a}'}}, \abs{c_{\mathbf{a}'}y +d_{\mathbf{a}'}} \lesssim \tau^{-1/2},\]
so $\abs{c_{\mathbf{a}'}(x - y)} \lesssim \tau^{-1/2}$ and thus $\abs{c_{\mathbf{a}'}} \lesssim \tau^{-1/2}$.

For the lower bound, let $x \in I_{a_n}$. Using lemma \ref{lem:bound-quotient}, we have
\[\tau^{-1/2} \lesssim \abs{c_{\mathbf{a}'} x + d_{\mathbf{a}'}} = \abs{c_{\mathbf{a}'}} \cdot \abs{x + \frac{d_{\mathbf{a}'}}{c_{\mathbf{a}'}}} \lesssim \abs{c_{\mathbf{a}'}}.\]
For $d_{\mathbf{a}'}$, we can write, using lemma \ref{lem:bound-quotient}:
\[\abs{d_{\mathbf{a}'}} = \abs{\frac{d_{\mathbf{a}'}}{c_{\mathbf{a}'}}}\cdot\abs{c_{\mathbf{a}'}} \asymp \tau^{-1/2}.\]
\end{proof}

\subsection{Transpose of $\Gamma$ and the dual IFS}\label{sec:dualIFS}
An essential input for our proof is a so-called ``non-concentration bound'' for the ratios $c_\mathbf{a}/d_\mathbf{a}$, $\mathbf{a} \in \mathcal{W}_\tau$. We could very well use the proof given by Bourgain-Dyatlov \cite[proof of Lemma 2.16]{BD}, but we will reformulate it in a different way. Briefly speaking, the technique they use relies on the fact, which we have also used above, that
\[\gamma_\mathbf{a}^{-1}(\infty) = -\frac{d_\mathbf{a}}{c_\mathbf{a}}.\]
Rather than using the inverse image of infinity, we will consider below the image of zero by the transpose matrix. This is admittedly a very small change, but we believe that it makes the proof slightly clearer (though somewhat longer) and relates it to other works in the literature.

So let us consider the \textit{transpose group} of $\Gamma$:
\[\Gamma^T := \{\gamma^T\,:\,\gamma\in\Gamma\},\]
where $\gamma^T$ is the transpose of $\gamma$ in the group $\mathrm{PSL}_2(\R)$. Writing \[J := \left({\begin{array}{cc}
   0 & 1 \\
   -1 & 0 \\
  \end{array} } \right),\] we have $\gamma^T = J\gamma^{-1}J^{-1}$. Thus $\Gamma^T$ is still a Schottky group, and since it is conjugate to $\Gamma$, it is again convex co-compact Schottky with the same critical exponent $\delta$. Therefore its Patterson-Sullivan measure $\nu$ is Ahlfors $\delta$-regular. (In fact $\nu = J_{*}\mu$.)

\begin{lem}For any word $\mathbf{a} \in \mathcal{W}$ with $\mathbf{a} = a_1\ldots a_n$, write $\overline{\mathbf{a}} := \overline{a_n}\ldots\overline{a_1}$.

There is a constant $C = C(\Gamma) > 0$ such that for any $\mathbf{a} \in \mathcal{W}$, we have
\[C^{-1}\abs{I_\mathbf{a}} \leq \abs{I_{\overline{\mathbf{a}}}} \leq C\abs{I_\mathbf{a}} .\]
\end{lem}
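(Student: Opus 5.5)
Note first that $\overline{\mathbf{a}}$ is again reduced, since $a_j\neq\overline{a_{j+1}}$ is equivalent to $\overline{a_{j+1}}\neq \overline{\overline{a_j}}$. Moreover, $\gamma_{\overline{\mathbf{a}}} = \gamma_{\overline{a_n}}\circ\cdots\circ\gamma_{\overline{a_1}} = \gamma_\mathbf{a}^{-1}$. So the statement compares the size of the cylinder of $\gamma_\mathbf{a}$ with that of its inverse. My plan is to express both lengths in terms of the matrix coefficients of $\gamma_\mathbf{a}$, and then show that both are comparable to $\norm{\gamma_\mathbf{a}}^{-2}$, which is invariant under inversion in $\mathrm{SL}_2(\R)$.

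First reduce to derivative bounds. By Lemma \ref{lem:quasi-bernoulli} we may replace $\abs{I_\mathbf{a}}$ by $\abs{I_{\mathbf{a}a}}$ for any admissible extra letter $a$. By Lemma \ref{lem:contraction} (applied to the word $\mathbf{a}a$, whose prefix is $\mathbf{a}$), $\abs{I_{\mathbf{a}a}}\asymp \abs{\gamma_\mathbf{a}'(x)} = (c_\mathbf{a}x+d_\mathbf{a})^{-2}$ for $x\in U_a$. Choose $x\in I_a$. As in the proof of Lemma \ref{lem:bounded-distortion}, $-d_\mathbf{a}/c_\mathbf{a}\in I_{\overline{a_n}}$ stays at distance $\gtrsim 1$ from $x$, and $x$ is bounded. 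Hence $\abs{c_\mathbf{a}x+d_\mathbf{a}}\asymp \abs{c_\mathbf{a}}$, and by Lemma \ref{lem:bound-quotient} also $\asymp\abs{d_\mathbf{a}}$. Thus
\[\abs{I_\mathbf{a}}\asymp c_\mathbf{a}^{-2}\asymp d_\mathbf{a}^{-2}.\]
The inverse matrix is
\[\gamma_\mathbf{a}^{-1}=\begin{pmatrix} d_\mathbf{a} & -b_\mathbf{a}\\ -c_\mathbf{a} & a_\mathbf{a}\end{pmatrix}.\]
Applying the same reasoning to $\overline{\mathbf{a}}$ gives $\abs{I_{\overline{\mathbf{a}}}}\asymp c_\mathbf{a}^{-2}$, since the lower-left entry of $\gamma_{\overline{\mathbf{a}}}$ is $-c_\mathbf{a}$. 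Comparing the two estimates proves the lemma.

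The main obstacle is making the first step rigorous for all lengths, including $\abs{\mathbf{a}}=1$. Here I must also check that $c_\mathbf{a}\neq 0$. This holds because $\gamma_\mathbf{a}^{-1}(\infty)\in I_{\overline{a_n}}$ is finite, which uses the assumption that the intervals $I_b$ are bounded; that in turn holds since $\infty\notin I_b$ after conjugation, as implicitly used in Lemma \ref{lem:bound-quotient}. Short words only contribute finitely many cases, so constants absorb them. One also needs an admissible letter $a$ with $a\neq\overline{a_n}$, which exists because $r\geq 2$. The comparability $\abs{c_\mathbf{a}x+d_\mathbf{a}}\asymp\abs{c_\mathbf{a}}$ requires $x$ to be bounded and separated from $-d_\mathbf{a}/c_\mathbf{a}$. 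Both follow from the finiteness and separation of the intervals $U_b$ that was already exploited in Lemma \ref{lem:bounded-distortion}.
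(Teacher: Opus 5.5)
Your proof is correct, but it takes a different route from the paper.

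The paper works with the derivative at $0$, i.e.\ with the entry $d_\mathbf{a}$. It proceeds in three steps:
\begin{enumerate}
\item It shows $\abs{\gamma_\mathbf{a}'(0)} = d_\mathbf{a}^{-2}\asymp\abs{I_\mathbf{a}}$.
\item It uses that transposition preserves $d_\mathbf{a}$, so $(\gamma_\mathbf{a}^T)'(0)=\gamma_\mathbf{a}'(0)$.
\item It writes $\gamma_\mathbf{a}^T = J\gamma_{\overline{\mathbf{a}}}J^{-1}$ and applies the chain rule to identify $\abs{(\gamma^T_\mathbf{a})'(0)}$ with $\abs{I_{\overline{\mathbf{a}}}}$ up to constants. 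Here $\abs{J'}\asymp 1$ because no $I_a$ contains $0$ or $\infty$.
\end{enumerate}

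You instead use the inversion $\gamma_{\overline{\mathbf{a}}}=\gamma_\mathbf{a}^{-1}$ directly. The lower-left entry is preserved up to sign under inversion in $\mathrm{SL}_2(\R)$. Combined with the localisation $-d_\mathbf{a}/c_\mathbf{a}\in I_{\overline{a_n}}$ already used in Lemma \ref{lem:bounded-distortion}, this gives $\abs{I_\mathbf{a}}\asymp c_\mathbf{a}^{-2}\asymp\abs{I_{\overline{\mathbf{a}}}}$.

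\textbf{What your version buys.} It is shorter and needs neither $J$ nor the hypothesis that $0\notin D_a$. It uses only boundedness and mutual separation of the $I_b$, plus the quasi-Bernoulli and contraction lemmas. This is essentially the Bourgain--Dyatlov ``inverse image of infinity'' viewpoint that the paper mentions. The appeal to Lemma \ref{lem:bound-quotient} for $d_\mathbf{a}$ and the heuristic about $\norm{\gamma_\mathbf{a}}^{-2}$ are not actually needed.

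\textbf{What the paper's version buys.} It is longer, but it deliberately rehearses the transpose mechanism $\gamma^T=J\gamma^{-1}J^{-1}$. That mechanism is used immediately afterwards in Lemma \ref{lem:NC-quotient}, where $c_\mathbf{a}/d_\mathbf{a}=\gamma^T_\mathbf{a}(0)$ is located in $J(I_{\overline{\mathbf{a}}})$, and it matches the dual-IFS picture the authors want to emphasise.
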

\begin{proof}
Now we note that we have the following, for any $\mathbf{a} \in \mathcal{W}$ which can be written $\mathbf{a} = a_1\cdots a_n$. Then:
\begin{align*}
\abs{\gamma'_\mathbf{a}(0)} = \abs{\gamma'_{\mathbf{a}'}(\gamma_{a_n}(0))}\cdot\abs{\gamma'_{a_n}(0)} \asymp \abs{\gamma'_{\mathbf{a}'}(\gamma_{a_n}(0))} \asymp \abs{I_{\mathbf{a}}},
\end{align*}
where the last relation comes from the fact that $\gamma_{a_n}(0) \in I_{a_n}$ and lemma \ref{lem:contraction}. On the other hand, using the matrix representation it is clear that
\[(\gamma_\mathbf{a}^T)'(0) = \gamma_\mathbf{a}'(0).\]
But we also have:
\begin{align*}
\abs{(\gamma^T_\mathbf{a})'(0)} 
&= \abs{(\gamma^T_{a_n} \circ \cdots \circ \gamma^T_{a_2})'(\gamma^T_{a_1}(0))}\cdot\abs{(\gamma^T_{a_1})'(0)}\\
&\asymp \abs{(J \circ \gamma_{\overline{a_n}} \circ \cdots \circ \gamma_{\overline{a_2}} \circ J^{-1})'(J\gamma_{\overline{a_1}}(\infty))} \\
&\asymp \abs{\gamma'_{\overline{a_n}\cdots\overline{a_2}}(\gamma_{\overline{a_1}}(\infty))}\\
&\asymp \abs{I_{\overline{\mathbf{a}}}}.
\end{align*}
Here we used the assumption we made that the $I_a, a\in\mathcal{A}$ contain neither $0$ nor $\infty$, which guarantees that all the derivatives of $J$ (and $J^{-1} = J$) when using the chain rule are $\asymp 1$. For the last relation, we used lemma \ref{lem:contraction} and the fact that $\gamma_{\overline{a_1}}(\infty)) \in I_{\overline{a_1}}$. Combining all these estimates, we get the result.
\end{proof}

\begin{lem}\label{lem:NC-quotient}
Let $M > 0, \tau > 0$ and consider a set $W$ such that for every $\mathbf{a} \in W$, we have
\[M^{-1}\tau \leq \abs{I_\mathbf{a}} \leq M\tau.\]
There is a constant $C = C(\Gamma, M) > 0$ such that for every $y \in \R$ and $\sigma \geq \tau$, we have
\[\#\left\{\mathbf{a} \in W\,:\,\abs{\frac{c_\mathbf{a}}{d_\mathbf{a}} - y} \leq \sigma\right\} \leq C\tau^{-\delta}\sigma^{\delta}.\]
\end{lem}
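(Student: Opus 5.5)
We plan to use the transpose group of \S\ref{sec:dualIFS}. For $\gamma_\mathbf{a} = \left({\begin{array}{cc} a_\mathbf{a} & b_\mathbf{a} \\ c_\mathbf{a} & d_\mathbf{a} \end{array}}\right)$ we have $\gamma_\mathbf{a}^T(0) = c_\mathbf{a}/d_\mathbf{a}$. Moreover $\gamma_\mathbf{a}^T = J\gamma_\mathbf{a}^{-1}J^{-1} = J\gamma_{\overline{\mathbf{a}}}J^{-1}$, so $c_\mathbf{a}/d_\mathbf{a} = J(\gamma_{\overline{\mathbf{a}}}(J^{-1}(0))) = J(\gamma_{\overline{\mathbf{a}}}(\infty))$. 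Since $\infty \notin I_{\overline{a_n}}$ by the standing assumption, the point $\gamma_{\overline{\mathbf{a}}}(\infty)$ lies in the cylinder $I_{\overline{\mathbf{a}}\,b}$ for any admissible letter $b$. More simply, $\gamma_{\overline{a_1}}(\infty)\in I_{\overline{a_1}}$, so $\gamma_{\overline{\mathbf{a}}}(\infty)\in I_{\overline{\mathbf{a}}}$, after absorbing the last letter into the cylinder definition (the definition uses $\mathbf{a}'$, so $\gamma_{\overline{\mathbf{a}}}(I_c) = I_{\overline{\mathbf{a}}c}$ for $c\neq a_1$). Thus $c_\mathbf{a}/d_\mathbf{a} \in J(I_{\overline{\mathbf{a}}c})$ for a suitable letter $c$. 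By the quasi-Bernoulli property (Lemma \ref{lem:quasi-bernoulli}) and the preceding lemma, $\abs{I_{\overline{\mathbf{a}}c}} \asymp \abs{I_{\overline{\mathbf{a}}}}\asymp\abs{I_\mathbf{a}} \asymp_M \tau$.

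Since all cylinders avoid $0$ and $\infty$, and hence stay a bounded distance from them, $J(x)=-1/x$ is bi-Lipschitz with constants depending only on $\Gamma$ on $\bigcup_a I_a$. Set $K_\mathbf{a} := J(I_{\overline{\mathbf{a}}c})$. This is an interval of length $\asymp_M \tau$ containing $c_\mathbf{a}/d_\mathbf{a}$.

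Next we exploit disjointness and Ahlfors regularity. The map $\mathbf{a}\mapsto\overline{\mathbf{a}}$ is injective. Elements of $W$ may be prefixes of one another, but only boundedly often: if $\mathbf{a}$ is a proper prefix of $\mathbf{b}$ with both in $W$, then by \eqref{eq:contraction} we get $\abs{\mathbf{b}}-\abs{\mathbf{a}} \leq C(M)$. Hence every point is covered by at most $C(M)$ of the cylinders $I_{\overline{\mathbf{a}}c}$, and the same holds for the $K_\mathbf{a}$. (The words $\overline{\mathbf{a}}$ end in different letters, but the multiplicity bound follows from the nesting/disjointness dichotomy of cylinders together with the length comparison.) Moreover, $\mu(I_{\overline{\mathbf{a}}c}) \gtrsim \tau^\delta$ by Lemma \ref{lem:ahlfors-regularity}, so $\nu(K_\mathbf{a}) = \mu(J^{-1}K_\mathbf{a})\gtrsim_M \tau^\delta$; alternatively, we can work directly with $\mu$ and the sets $I_{\overline{\mathbf{a}}c}$.

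To count, let $\mathbf{a}\in W$ satisfy $\abs{c_\mathbf{a}/d_\mathbf{a}-y}\leq\sigma$. Then $K_\mathbf{a}\subset [y-\sigma-C\tau, y+\sigma+C\tau]\subset[y-C'\sigma,y+C'\sigma]$, using $\sigma\geq\tau$. Pulling back by $J^{-1}$, the corresponding cylinders $I_{\overline{\mathbf{a}}c}$ lie in the set $J^{-1}([y-C'\sigma,y+C'\sigma])\cap\bigcup_a I_a$. Because $J^{-1}$ is bi-Lipschitz near the cylinders, this set is covered by $O(1)$ intervals of length $\lesssim\sigma$. We may restrict to $\sigma$ small, since for $\sigma\gtrsim1$ the claim follows from $\#W\lesssim\tau^{-\delta}$, which is itself the case $y$ arbitrary of this argument. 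Summing measures with bounded multiplicity gives
\[\#\{\mathbf{a}\}\cdot\tau^\delta \lesssim_M \sum \mu(I_{\overline{\mathbf{a}}c}) \lesssim_M \mu(\text{$O(1)$ intervals of length }\lesssim\sigma)\lesssim\sigma^\delta,\]
by the upper Ahlfors bound, which yields the claim.

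The main obstacle is the bookkeeping in the first step: placing $c_\mathbf{a}/d_\mathbf{a}$ correctly inside a reversed-word cylinder of size $\asymp\tau$, with the right letter conventions (recall $I_\mathbf{a}=\gamma_{\mathbf{a}'}(I_{a_n})$), and checking that $J$ has derivatives $\asymp1$ there. We expect the bounded-overlap claim to be routine.
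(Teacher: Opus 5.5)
Your argument is essentially the paper's. You write $c_{\mathbf a}/d_{\mathbf a}=\gamma_{\mathbf a}^T(0)=J(\gamma_{\overline{\mathbf a}}(\infty))$ and place this point in $J$ of the reversed cylinder, which has size $\asymp_M\tau$. You then count using bounded multiplicity of the reversed cylinders (Lemma \ref{lem:bounded-multiplicity} applied to $\{\overline{\mathbf a}:\mathbf a\in W\}$) together with Ahlfors regularity of $\nu=J_*\mu$, or equivalently of $\mu$ after pulling back by $J$, as you do.

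One bookkeeping step is false, though it is easily repaired.

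\begin{itemize}
\item The point $c_{\mathbf a}/d_{\mathbf a}$ is \emph{never} in $J(I_{\overline{\mathbf a}c})$. Since $\gamma_{\overline{\mathbf a}}$ is injective and $\infty\notin I_c$, we have $\gamma_{\overline{\mathbf a}}(\infty)\notin\gamma_{\overline{\mathbf a}}(I_c)=I_{\overline{\mathbf a}c}$. For the same reason your first sentence cannot hold: the cylinders $I_{\overline{\mathbf a}b}$ are disjoint, so the point cannot lie in all of them.
\item The correct fact, which you also state, is
\[
\gamma_{\overline{\mathbf a}}(\infty)=\gamma_{\overline{a_n}\cdots\overline{a_2}}(\gamma_{\overline{a_1}}(\infty))\in\gamma_{\overline{a_n}\cdots\overline{a_2}}(I_{\overline{a_1}})=I_{\overline{\mathbf a}}.
\]
This uses $\infty\notin D_{a_1}$, not $\infty\notin I_{\overline{a_n}}$.
\item To fix it, take $K_{\mathbf a}:=J(I_{\overline{\mathbf a}})$ as the paper does. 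Alternatively, keep $I_{\overline{\mathbf a}c}\subset I_{\overline{\mathbf a}}$ and use only that $K_{\mathbf a}$ lies within distance $\lesssim_M\tau\le\sigma$ of $c_{\mathbf a}/d_{\mathbf a}$, which is all your counting step needs.
\end{itemize}

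For the overlap bound, what matters is the suffix relation among words of $W$, i.e.\ the prefix relation among the $\overline{\mathbf a}$. Prefixes within $W$ are beside the point, but your parenthetical remark already handles this correctly.
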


\begin{proof}
The key to the proof is that for any $\mathbf{a} \in \mathcal{W}$, we have
\[
\gamma_\mathbf{a}^T(0)=\frac{c_\mathbf{a}}{d_\mathbf{a}}.
\]
Let
\[
\mathcal A=\left\{\mathbf{a}\in\mathcal W:\left|\frac{c_\mathbf{a}}{d_\mathbf{a}}-y\right|\le\sigma\right\}.
\]
Then for each $\mathbf a\in\mathcal A$, writing
$\mathbf{a} = a_1\cdots a_n$, we have
\begin{align*}
\frac{c_\mathbf a}{d_\mathbf a}
&=\gamma_\mathbf a^T(0)\\
&=\gamma_{a_n}^T\circ \cdots \circ \gamma_{a_1}^T(0)\\
&=J \circ \gamma_{a_n}^{-1}\circ \cdots \circ \gamma_{a_1}^{-1}\circ J^{-1}(0)\\
&=J\circ\gamma_{\overline{a_n}\cdots\overline{a_1}}(\infty)\\
&\in J(I_{\overline{\mathbf{a}}}) =: J_{\overline{\mathbf{a}}},
\end{align*}
where we have used that $\infty\notin I_{a_1}$. Thus
$J_{\overline{\mathbf a}}$ meets $B(y,\sigma)$ non-trivially. Since $J$ is bilipischitz,
$|J_{\overline{\mathbf a}}|\asymp_M\tau$. Because $\sigma\ge\tau$, there is
$C=C(\Gamma, M)>0$ such that
\[
J_{\overline{\mathbf a}}\subset B(y,C\sigma).
\]

Moreover, the cylinders $J_{\overline{\mathbf a}}$ have uniformly bounded
overlap, depending only on $M$ and $\Gamma$, by lemma \ref{lem:bounded-multiplicity} below, and satisfy
\[
\nu(J_{\overline{\mathbf a}})
\asymp |J_{\overline{\mathbf a}}|^\delta
\asymp_M \tau^\delta,
\]
by lemma \ref{lem:ahlfors-regularity} and the fact that $\nu = J_{*}\mu$. Hence
\[
\#\mathcal A\,\tau^\delta
\lesssim_M
\sum_{\mathbf a\in\mathcal A}\nu(J_{\overline{\mathbf a}})
\lesssim
\nu(B(y,C\sigma))
\lesssim_M
\sigma^\delta,
\]
where the last inequality is Ahlfors regularity of $\nu$. Thus
\[
\#\mathcal A\lesssim_M \tau^{-\delta}\sigma^\delta,
\]
as claimed.
\end{proof}
\begin{lem}
\label{lem:bounded-multiplicity}
Let $M > 1$ be a constant. Let $\tau > 0$ and consider a set of words $B \subset\mathcal{W}$ such that for every $\mathbf{b} \in B$, we have
$M^{-1}\tau \leq \abs{I_\mathbf{b}} \leq M\tau.$
Then there is a constant $K = K(\Gamma, M)$ which depends only on $\Gamma$ and $M$ such that the covering $\{I_\mathbf{b}\}_{\mathbf{b} \in B}$ has multiplicity bounded by $K$, that is,
$\sum_{\mathbf{b} \in B} \mathbf{1}_{I_\mathbf{b}} \leq K.$
\end{lem}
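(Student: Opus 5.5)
Fix $x \in \R$; we want to bound $\#\{\mathbf{b} \in B : x \in I_\mathbf{b}\}$ by a constant depending only on $\Gamma$ and $M$. The plan is to reduce to a comparison of word lengths and then use the tree structure of cylinders. First, two cylinders $I_\mathbf{b}, I_\mathbf{c}$ that intersect must be nested in the sense that one of $\mathbf{b},\mathbf{c}$ is a prefix of the other. Indeed, if neither is a prefix of the other, then $I_\mathbf{b}\cap I_\mathbf{c}=\varnothing$ by the disjointness property recorded in the subsection on the action on the real line. So all words $\mathbf{b}\in B$ with $x\in I_\mathbf{b}$ are pairwise comparable under the prefix order. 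Hence they form a chain $\mathbf{b}_1 \prec \mathbf{b}_2 \prec \cdots \prec \mathbf{b}_k$, with strictly increasing lengths (distinct words in a prefix chain have distinct lengths).

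Next we bound the length of such a chain. Apply the contraction estimate \eqref{eq:contraction} to $\mathbf{b}_1$, a prefix of $\mathbf{b}_k$:
\[
M^{-1}\tau \leq \abs{I_{\mathbf{b}_k}} \leq \kappa^{\abs{\mathbf{b}_k}-\abs{\mathbf{b}_1}}\abs{I_{\mathbf{b}_1}} \leq \kappa^{\abs{\mathbf{b}_k}-\abs{\mathbf{b}_1}} M\tau .
\]
This gives $\kappa^{\abs{\mathbf{b}_k}-\abs{\mathbf{b}_1}}\geq M^{-2}$, so $\abs{\mathbf{b}_k}-\abs{\mathbf{b}_1}\leq 2\log M/\log(1/\kappa)$. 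Since the lengths are distinct integers in an interval of that length, we get $k\leq 1+2\log M/\log(1/\kappa)=:K(\Gamma,M)$. Summing indicators at the arbitrary point $x$ then gives $\sum_{\mathbf{b}\in B}\mathbf{1}_{I_\mathbf{b}}(x)\leq K$.

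The only point needing care is the disjointness/nesting dichotomy at the level of closed intervals. We need that non-comparable cylinders are genuinely disjoint and do not merely share an endpoint. This follows because the half-disks $D_a$ are pairwise disjoint closed sets, and hence so are their images under $\gamma_\mathbf{a}$. Explicitly, if $\mathbf{b},\mathbf{c}$ first differ at position $j$ after a common prefix $\mathbf{p}$, then $I_\mathbf{b}\subset\gamma_{\mathbf{p}}(I_{b_j})$ and $I_\mathbf{c}\subset\gamma_{\mathbf{p}}(I_{c_j})$ with $I_{b_j}\cap I_{c_j}=\varnothing$. Here the convention that a length-one word has cylinder $I_a$ is used, and the case where $\mathbf{p}$ is empty is handled directly.

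The remaining issue is that the statement is applied in Lemma \ref{lem:NC-quotient} to the intervals $J_{\overline{\mathbf{a}}}=J(I_{\overline{\mathbf{a}}})$. This is immediate: $J$ is a bijection, so multiplicity is preserved, and by the preceding lemma $\abs{I_{\overline{\mathbf{a}}}}\asymp\abs{I_\mathbf{a}}$. So the reversed words still satisfy the size hypothesis, with $M$ replaced by $CM$. I expect no real obstacle beyond this bookkeeping.
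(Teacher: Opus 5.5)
Your proof is correct and follows the paper's argument: the prefix/disjointness dichotomy for cylinders, combined with the contraction estimate \eqref{eq:contraction}, bounds the length gap between comparable words of $B$ by $2\log M/\log(1/\kappa)$. Your extra observation is that the words covering a fixed point form a prefix chain, with at most one word per length. This makes the final count cleaner than the paper's count of extensions of a given word, and yields the explicit constant $K = 1 + 2\log M/\log(1/\kappa)$, which does not depend on the alphabet size.
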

\begin{proof}
The intervals $I_\mathbf{b}$ are nested, that is, if $I_\mathbf{b}$ and $I_\mathbf{c}$ meet, either $\mathbf{b}$ is a prefix of $\mathbf{c}$, or $\mathbf{c}$ is a prefix of $\mathbf{b}$. Assume first that $\mathbf{c}$ is a prefix of $\mathbf{b}$. Then we have
\[M^{-1}\tau \leq \abs{I_{\mathbf{b}}} \leq \kappa^{\abs{\mathbf{b}}-\abs{\mathbf{c}}}\abs{I_\mathbf{c}} \leq \kappa^{\abs{\mathbf{b}}-\abs{\mathbf{c}}}M\tau,\]
where we have used uniform contraction (equation \eqref{eq:contraction}), and therefore $\abs{\mathbf{b}} - \abs{\mathbf{c}}$ is less than a constant depending only on $\Gamma$ and $M$. Therefore, there are finitely many $\mathbf{b}$ such that $I_\mathbf{b}$ is included in $I_\mathbf{c}$. If instead $\mathbf{b}$ is a prefix of $\mathbf{c}$, we can exchange the roles of $\mathbf{c}$ and $\mathbf{d}$ to deduce that there are finitely many possibilities again, with a bound depending only on $\Gamma$ and $M$. This proves the result.
\end{proof}

\begin{rem}
 As we said, this proof is present, in a shorter form avoiding the transpose, in \cite{BD}. It also appears for the special case of the Gauss map in notes of Naud \cite{Naud} where it is used to check the so-called ``uniform non-integrability" condition; in that case the Fuchsian group is generated by symmetric matrices so there is no need for the transpose. Finally, this also fits well with the point of view of the ``dual IFS" introduced in \cite{BKT}. There, a notion of dual IFS is defined as an infinite dimensional system. In our case it would be defined as follows: let $\Phi^* := \{F_\gamma : \gamma \in \Gamma\}$, where
$$F_\gamma(h)(x)
:=
\gamma'(x)\, h(\gamma x)
+
\frac{\gamma''(x)}{\gamma'(x)} , \quad x \in \mathbb{R} \cup \{\infty\}.
$$
The one-dimensional manifold $M = \{x \mapsto  h_t(x):=-\frac{2t}{tx+1}:t\in\mathbb R\cup\{\infty\} \}$ is invariant under the dual action: $F_\gamma(h_t)=h_{\gamma^T(t)}$ for the transpose $\gamma^T$. Indeed, $h_{\tilde{\gamma}^T(0)}(x) = \frac{\tilde{\gamma}''(x)}{\tilde{\gamma}'(x)}$ for any $\tilde{\gamma} \in \mathrm{SL}_2(\R)$. Then the formula follows by two applications of the chain rule. Thus the dual attractor $\Lambda_\Gamma^*$ is the set of accumulation points of $\{F_{\mathbf{a}}(0) : \mathbf{a} \in \mathcal{W}\}$, which is equal to $\{h_t : t \in \Lambda_{\Gamma^T}\}$, where $\Lambda_{\Gamma^T}$ is the limit set of $\Gamma^T$. This follows from the fact that $\Lambda_{\Gamma^T}$ is the closure of the set of $\Gamma^T(0) \in \R \cup \{\infty\}$, for $\gamma\in\Gamma$.
\end{rem}

\section{Proof}

\subsection{Reduction to oscillatory Lebesgue integrals}

The proof is fundamentally based on the following approximation lemma:
\begin{lem}\label{lma:approximation}
Let $I$ be an interval. Let $m$ be a probability measure on $I$, satisfying a Frostman condition with exponent $s$ and constant $1$: for every interval $J$ of radius $r > 0$, we have $m(J) \leq r^s.$ 

Let $f$ be a bounded $C^1$ function on $I$. Then for every $\eta > 0$ and every interval $J$ with $J + [-\eta, \eta] \subset I$, we have:
\[\int_J \abs{f}\deriv m \leq \eta^{(s - 1)/2}\norm{f}_{L^2(J + [-\eta,\eta], \Leb)} + \eta \norm{f'}_{\infty}.\]
\label{lem:approximation-measure}
\end{lem}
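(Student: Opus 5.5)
Mollify $m$ at scale $\eta$, compare $\int |f|\,dm$ with the integral against the mollified density, and then apply Cauchy–Schwarz. Concretely, let $\phi_\eta := (2\eta)^{-1}\mathbf{1}_{[-\eta,\eta]}$. For $x\in J$ and $|t|\le\eta$, the mean value theorem gives
\[
\abs{f}(x) \le \abs{f}(x+t) + \eta\norm{f'}_\infty .
\]
Averaging this over $t$ against $\phi_\eta(t)\,dt$ yields
\[
\abs{f}(x) \le (\abs{f}\ast\phi_\eta)(x) + \eta\norm{f'}_\infty ,
\]
where the convolution only involves values of $f$ on $J+[-\eta,\eta]\subset I$. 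Integrating over $J$ against $m$, and using that $m$ is a probability measure, the error term contributes at most $\eta\norm{f'}_\infty$.

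Next use Fubini on the main term:
\[
\int_J (\abs{f}\ast\phi_\eta)\,dm = \int_{J+[-\eta,\eta]} \abs{f}(y)\,g(y)\,dy,
\qquad
g(y) := (2\eta)^{-1} m\bigl(J\cap[y-\eta,y+\eta]\bigr).
\]
The Frostman condition with constant $1$, applied to the interval of radius $\eta$ centred at $y$, gives $g(y)\le \tfrac12\eta^{s-1}\le \eta^{s-1}$. Moreover
\[
\int g\,dy = m(J) \le 1 ,
\]
since each point of $J$ is counted with total weight $1$. Hence
\[
\norm{g}_{L^2}^2 \le \norm{g}_\infty\norm{g}_{L^1} \le \eta^{s-1}.
\]
Cauchy–Schwarz then bounds the main term by $\eta^{(s-1)/2}\norm{f}_{L^2(J+[-\eta,\eta],\Leb)}$. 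Adding the two contributions gives exactly the claimed inequality.

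No step here is a real obstacle. The only point needing care is the $L^2$ bound for the smoothed density $g$. One should interpolate between $L^\infty$ (from Frostman) and $L^1$ (from total mass) rather than simply bounding $g$ by $\eta^{s-1}$ pointwise and using $|J|$. The interpolation is what produces the exponent $(s-1)/2$ with no dependence on $|J|$, and it also keeps the constants equal to $1$ as stated.
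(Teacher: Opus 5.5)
Your proof is correct and is essentially the paper's argument: the same box-kernel mollification by $\frac{1}{2\eta}\mathbf{1}_{[-\eta,\eta]}$, the same mean-value error term $\eta\norm{f'}_\infty$, and the same Frostman bound $\frac12\eta^{s-1}$ on the smoothed density. The only difference is the final step. The paper splits into $\{\abs{f}>\alpha\}$ and $\{\abs{f}\le\alpha\}$ and takes $\alpha=\eta^{(s-1)/2}\norm{f}_{L^2(J+[-\eta,\eta],\Leb)}$, which as written gives $2\eta^{(s-1)/2}\norm{f}_{L^2}$, a harmless constant loss. Your Cauchy--Schwarz step with $\norm{g}_{L^2}^2\le\norm{g}_\infty\norm{g}_{L^1}$ uses the same two inputs (the sup bound and total mass at most $1$) but actually achieves the constant $2^{-1/2}\le 1$ claimed in the statement.
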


\begin{proof}
Let $\alpha > 0$ to be chosen later. Let $P_\eta = \frac{1}{2\eta}\mathbf{1}_{[-\eta, \eta]}$ and $m_\eta = m * P_\eta$. Then $m_\eta$ is absolutely continuous with respect to Lebesgue measure and the density is 
$\frac{\deriv m_\eta(x)}{\deriv x} = \frac{1}{2\eta}m(B(x, \eta))\leq\frac{1}{2}\eta^{s-1}.$ Then we can write
\begin{align*}
\int_J \abs{f}\deriv m
&= \int_J \abs{f}*P_\eta \deriv m + \int_J (\abs{f} - \abs{f}*P_\eta)\deriv m \\
&\leq \int_{J + [-\eta,\eta]}\abs{f}\deriv m_\eta +\eta\norm{f'}_\infty \\
&= \int_{J + [-\eta,\eta]} \abs{f}\mathbf{1}_{\{\abs{f} > \alpha\}} \deriv m_\eta + \int_{J + [-\eta,\eta]} \abs{f}\mathbf{1}_{\abs{f} \leq \alpha} \deriv m_\eta +  \eta\norm{f'}_\infty \\
&\leq \frac{1}{\alpha}\int_J \abs{f}^2 \frac{\deriv m_\eta(x)}{\deriv x}\deriv x + \alpha + \eta\norm{f'}_\infty \\
&\leq \frac{\eta^{s-1}}{\alpha}\norm{f}_{L^2(J + [-\eta,\eta], \Leb)}^2 + \alpha + \eta\norm{f'}_\infty.
\end{align*}
Now we choose $\alpha = \eta^{(s - 1)/2}\norm{f}_{L^2(J + [-\eta,\eta], \Leb)}$. This gives the required upper bound.
\end{proof}

Let us now fix $\xi \in \R$ with $\abs{\xi}$ large enough, depending only on $\Gamma$, and study a function defined for any $b \in \mathcal{A}$ and $x \in U_b$ by
\begin{equation}\label{eq:F-def}
f(x) = f_\xi(x) 
:=
\sum_{\mathbf{a}\in\mathscr{W}_\tau \atop \mathbf{a} \rightsquigarrow b}
e^{-2\pi i\xi\gamma_{\mathbf{a}'}(x)}\,w_{\mathbf{a}'}(x).
\end{equation}
Here we recall that $w_{\mathbf{a}'}(x) = \abs{\gamma_{\mathbf{a}'}(x)}_{\mathbb{S}^1}^\delta.$
Then by the properties of the Patterson-Sullivan measure, we have:
$\widehat{\mu}(\xi)=\int f\,d\mu.$
We also have the following bound:
\begin{lem}
\label{lem:bound-derivative-f}
    There is a constant $C = C(\Gamma) > 0$ such that
    \[\norm{f'(x)}_\infty \leq C(1 + \tau\abs{\xi}).\]
\end{lem}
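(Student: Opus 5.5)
We differentiate the sum \eqref{eq:F-def} term by term. Fix $b\in\mathcal{A}$ and $x\in U_b$. For each $\mathbf{a}\in\mathcal{W}_\tau$ with $\mathbf{a}\rightsquigarrow b$, the derivative of the corresponding term is
\[
\Bigl(-2\pi i\xi\,\gamma'_{\mathbf{a}'}(x)\,w_{\mathbf{a}'}(x) + w'_{\mathbf{a}'}(x)\Bigr)e^{-2\pi i\xi\gamma_{\mathbf{a}'}(x)}.
\]
It therefore suffices to show, uniformly in $\mathbf{a}$ and $x\in U_b$, that
\[
\abs{\gamma'_{\mathbf{a}'}(x)}\lesssim\tau, \qquad w_{\mathbf{a}'}(x)\lesssim \tau^\delta, \qquad \abs{w'_{\mathbf{a}'}(x)}\lesssim\tau^\delta.
\]
Then summing over at most $\#\mathcal{W}_\tau\lesssim\tau^{-\delta}$ words gives $\abs{f'(x)}\lesssim \tau^{-\delta}(\abs{\xi}\tau\cdot\tau^\delta+\tau^\delta) = 1+\tau\abs{\xi}$, as claimed.

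The first bound is exactly the stopping-time lemma for $\mathcal{W}_\tau$. For the second, we write
\[
\abs{\gamma'_{\mathbf{a}'}(x)}_{\mathbb{S}^1}=\frac{1+x^2}{1+\gamma_{\mathbf{a}'}(x)^2}\abs{\gamma'_{\mathbf{a}'}(x)}.
\]
Here $x$ ranges over the bounded set $U_b$. Also $\gamma_{\mathbf{a}'}(x)$ lies in a bounded set, since $\gamma_{\mathbf{a}'}(U_b)$ is a small neighbourhood of $I_{\mathbf{a}}\subset I_{a_1}$ when $\eta_0$ is small; this uses \eqref{eq:schottky} and that $\infty\notin I_c$ for all $c$. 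So the prefactor is $\asymp 1$, and $w_{\mathbf{a}'}\asymp\tau^\delta$.

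For the derivative bound we take the logarithmic derivative:
\[
\frac{w'_{\mathbf{a}'}}{w_{\mathbf{a}'}}(x)=\delta\left(\frac{2x}{1+x^2}-\frac{2\gamma_{\mathbf{a}'}(x)\gamma'_{\mathbf{a}'}(x)}{1+\gamma_{\mathbf{a}'}(x)^2}+\frac{\gamma''_{\mathbf{a}'}(x)}{\gamma'_{\mathbf{a}'}(x)}\right).
\]
The first two terms are bounded, because $x$ and $\gamma_{\mathbf{a}'}(x)$ are bounded and $\abs{\gamma'_{\mathbf{a}'}}\lesssim\tau\le1$. The last term is bounded by Lemma \ref{lem:bounded-distortion}. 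Hence $\abs{w'_{\mathbf{a}'}}\lesssim w_{\mathbf{a}'}\lesssim\tau^\delta$.

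The only delicate point is confirming that $\gamma_{\mathbf{a}'}(x)$ stays bounded for $x\in U_b$ rather than just $x\in I_b$. This is not a real obstacle: the formula $\gamma_{\mathbf{a}'}(x)=\frac{a x+b}{cx+d}$ has denominator $\asymp\tau^{-1/2}$ on $U_b$ by the stopping-time lemma, so the image is a controlled small interval near $I_{\mathbf{a}}$. Alternatively, one can simply choose $\eta_0$ small enough that $U_b$ stays inside the region mapped into a neighbourhood of the $D_a$.
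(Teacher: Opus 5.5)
Your proof is correct and follows the paper's route. You differentiate term by term and use $|\gamma'_{\mathbf{a}'}|\lesssim\tau$ from the stopping-time lemma, the bounds $|w_{\mathbf{a}'}|,|w'_{\mathbf{a}'}|\lesssim\tau^\delta$ of Lemma~\ref{lem:bound-w} (which you correctly justify via the logarithmic derivative and bounded distortion), and $\#\mathcal{W}_\tau\lesssim\tau^{-\delta}$. Your ``delicate point'' about boundedness of $\gamma_{\mathbf{a}'}(x)$ is actually unnecessary for these upper bounds, since $\frac{1}{1+y^2}\le 1$ and $\frac{2|y|}{1+y^2}\le 1$ for every real $y$.
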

This is in turn a consequence of the following simple lemma, which follows from the definition of $\mathcal{W}_\tau$ and bounded distortion (lemma \ref{lem:bounded-distortion}):
\begin{lem}
    \label{lem:bound-w}
    There is a constant $C = C(\Gamma) > 0$ such that for every $b \in \mathcal{A}$, $\mathbf{a} \in \mathcal{W}_\tau$ with $\mathbf{a} \rightsquigarrow b$, we have:
    \[\sup_{x\in U_b}\abs{w_{\mathbf{a}'}(x)}, \sup_{x\in U_b}\abs{w'_{\mathbf{a}'}(x)} \leq C\tau^\delta.\]
\end{lem}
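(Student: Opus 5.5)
We prove Lemma~\ref{lem:bound-w} by writing $w_{\mathbf{a}'}$ explicitly and applying Lemma~\ref{lem:contraction} together with the bounded distortion estimate of Lemma~\ref{lem:bounded-distortion}. For $x\in U_b$ we have
\[
w_{\mathbf{a}'}(x) = \left(\frac{1+x^2}{1+\gamma_{\mathbf{a}'}(x)^2}\right)^{\delta}\abs{\gamma'_{\mathbf{a}'}(x)}^{\delta}.
\]
The first factor is bounded above and below by constants depending only on $\Gamma$. Indeed, $U_b$ is a bounded set, so $1+x^2\asymp 1$. Moreover $1+\gamma_{\mathbf{a}'}(x)^2\geq 1$, and it is also bounded above: $\gamma_{\mathbf{a}'}(U_b)$ lies in a fixed bounded neighbourhood of $I_{a_1}$, since $\gamma_{\mathbf{a}'}$ maps $U_b$ close to $I_{\mathbf{a}}\subset I_{a_1}$ provided $\eta_0$ is small. (When $|\mathbf{a}|=1$, $\mathbf{a}'$ is empty and $\gamma_{\mathbf{a}'}$ is the identity, so these claims are trivial.) For the second factor, the lemma on $\mathcal{W}_\tau$ gives $\abs{\gamma'_{\mathbf{a}'}(x)}\leq C\tau$ on $U_b$. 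Hence $\abs{w_{\mathbf{a}'}(x)}\lesssim \tau^\delta$.

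For the derivative, we differentiate the logarithm:
\[
\frac{w'_{\mathbf{a}'}(x)}{w_{\mathbf{a}'}(x)} = \delta\left(\frac{2x}{1+x^2} - \frac{2\gamma_{\mathbf{a}'}(x)\gamma'_{\mathbf{a}'}(x)}{1+\gamma_{\mathbf{a}'}(x)^2} + \frac{\gamma''_{\mathbf{a}'}(x)}{\gamma'_{\mathbf{a}'}(x)}\right).
\]
This is legitimate because $\gamma'_{\mathbf{a}'}=(c_{\mathbf{a}'}x+d_{\mathbf{a}'})^{-2}>0$ on $U_b$: the pole $-d_{\mathbf{a}'}/c_{\mathbf{a}'}$ lies in $I_{\overline{a_n}}$, which is at positive distance from $U_b$. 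We bound the three terms separately.
\begin{itemize}
\item The first term is bounded by $1$.
\item The second term is $O(1)$: we have $\abs{t}/(1+t^2)\leq 1/2$, and $\abs{\gamma'_{\mathbf{a}'}}\lesssim\tau\leq 1$.
\item The third term is bounded by $C(\Gamma)$ by Lemma~\ref{lem:bounded-distortion} (for $|\mathbf{a}|\geq 2$; it vanishes when $\mathbf{a}'$ is empty).
\end{itemize}
Therefore $\abs{w'_{\mathbf{a}'}}\lesssim\abs{w_{\mathbf{a}'}}\lesssim\tau^\delta$.

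There is no real obstacle here. The only point needing care is the uniform control of $\gamma_{\mathbf{a}'}(x)$ and $\gamma'_{\mathbf{a}'}(x)$ on the enlarged set $U_b$ rather than on $I_b$. This is handled by the choice of $\eta_0$ in the Notation, which keeps $U_b$ at positive distance from the poles.

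Lemma~\ref{lem:bound-derivative-f} then follows by differentiating \eqref{eq:F-def} term by term. Each term has derivative bounded by
\[
2\pi\abs{\xi}\,\abs{\gamma'_{\mathbf{a}'}}\,\abs{w_{\mathbf{a}'}} + \abs{w'_{\mathbf{a}'}} \lesssim \tau^\delta(1+\tau\abs{\xi}).
\]
Summing over the at most $\#\mathcal{W}_\tau\lesssim\tau^{-\delta}$ words gives the bound $C(1+\tau\abs{\xi})$.
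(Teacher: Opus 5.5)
Your proof is correct and follows the route the paper indicates without writing out: the lemma on $\mathcal{W}_\tau$ gives $\abs{\gamma'_{\mathbf{a}'}}\asymp\tau$ on $U_b$, hence $w_{\mathbf{a}'}\lesssim\tau^\delta$, and logarithmic differentiation with bounded distortion (Lemma~\ref{lem:bounded-distortion}) gives $\abs{w'_{\mathbf{a}'}}\lesssim w_{\mathbf{a}'}$. You also handle the spherical factor $\frac{1+x^2}{1+\gamma_{\mathbf{a}'}(x)^2}$ explicitly, which the paper skips; for the upper bounds only $1+\gamma_{\mathbf{a}'}(x)^2\geq 1$ and $\abs{t}/(1+t^2)\leq 1/2$ are needed, so your claim that $\gamma_{\mathbf{a}'}(U_b)$ is bounded is true but not required.
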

We also have the following $L^2$ bound, which we will prove in the next section, and is the main part of the proof:
\begin{lem}\label{lma:L2}
    There is a constant $C = C(\Gamma) > 0$ such that for any $b\in\mathcal{A}$,
    \[\norm{f}_{L^2(U_b, \Leb)}^2 \leq C\left( \abs{\xi}^{-1/2}\tau^{-1/2} + \abs{\xi}^{-1}\tau^{-1} +\tau^\delta\right).\]
\end{lem}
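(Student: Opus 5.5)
Expand the square: with a smooth cutoff $\chi\geq \mathbf{1}_{U_b}$ supported slightly beyond $U_b$ (we may shrink $\eta_0$ so that bounded distortion and Lemma \ref{lem:bound-w} still hold on the support), we have
\[\norm{f}_{L^2(U_b)}^2 \leq \sum_{\mathbf{a},\mathbf{b}\in\mathcal{W}_\tau,\ \mathbf{a},\mathbf{b}\rightsquigarrow b} \abs{\int \chi(x)\, e^{-2\pi i\xi(\gamma_{\mathbf{a}'}(x)-\gamma_{\mathbf{b}'}(x))} w_{\mathbf{a}'}(x)w_{\mathbf{b}'}(x)\,\deriv x}.\]
The amplitude is $O(\tau^{2\delta})$ with derivative $O(\tau^{2\delta})$ by Lemma \ref{lem:bound-w}. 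The phase is $\phi=\xi(\gamma_{\mathbf{a}'}-\gamma_{\mathbf{b}'})$, and its derivative is
\[\phi'(x)=\xi\left(\frac{1}{(c_{\mathbf{a}'}x+d_{\mathbf{a}'})^2}-\frac{1}{(c_{\mathbf{b}'}x+d_{\mathbf{b}'})^2}\right).\]
Since each factor $\abs{c x+d}\asymp\tau^{-1/2}$ and $c/d$ stays away from $0$, I factor this as a difference of squares. Writing $u_{\mathbf{a}}(x)=c_{\mathbf{a}'}x+d_{\mathbf{a}'}=d_{\mathbf{a}'}(1+t_\mathbf{a} x)$ with $t_\mathbf{a}=c_{\mathbf{a}'}/d_{\mathbf{a}'}$, we get $\phi' = \xi (u_\mathbf{b}^2-u_\mathbf{a}^2)/(u_\mathbf{a}^2u_\mathbf{b}^2)$, with $u_\mathbf{a}^2u_\mathbf{b}^2\asymp\tau^{-2}$. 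The goal is a lower bound $\abs{\phi'}\gtrsim \abs{\xi}\tau\,\abs{t_\mathbf{a}-t_\mathbf{b}}$, or at least a lower bound on $\abs{\phi'}+\abs{\phi''}$ of that size, uniformly on the support.

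The derivative $\phi'$ can vanish at a point, since $\abs{d_{\mathbf{a}'}}/\abs{d_{\mathbf{b}'}}$ may differ from $1$ while the slopes differ. So I will use the van der Corput second-derivative test (or stationary phase with one nondegenerate critical point) rather than nonstationary phase alone. Compute
\[\phi''=-2\xi\left(\frac{c_{\mathbf{a}'}}{u_\mathbf{a}^3}-\frac{c_{\mathbf{b}'}}{u_\mathbf{b}^3}\right).\]
Combining this with $\phi'$ (the Wronskian-type combination of $u^{-2}$ and $c\,u^{-3}$), I will show that at any point where $\abs{\phi'}$ is small, $\abs{\phi''}\gtrsim\abs{\xi}\tau\abs{t_\mathbf{a}-t_\mathbf{b}}$. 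Indeed, if $u_\mathbf{a}^2\approx u_\mathbf{b}^2$ then $\phi''\approx -2\xi u^{-2}(t_\mathbf{a}/(1+t_\mathbf{a}x)-t_\mathbf{b}/(1+t_\mathbf{b}x))$, and this is $\asymp \abs{\xi}\tau\abs{t_\mathbf{a}-t_\mathbf{b}}$ by the explicit derivative of $t\mapsto t/(1+tx)$, using that $1+tx$ is bounded away from $0$ for $t$ near the dual limit set and $x\in U_b$ (this is the same argument as Lemma \ref{lem:bounded-distortion}). Van der Corput then gives, for each pair,
\[\abs{I_{\mathbf{a},\mathbf{b}}}\lesssim \tau^{2\delta}\min\bigl(1,\ (\abs{\xi}\tau\abs{t_\mathbf{a}-t_\mathbf{b}})^{-1/2}\bigr).\]
Since $\phi'$ is a monotone-type function, the first-derivative test, which gives $(\abs{\xi}\tau\abs{t_\mathbf{a}-t_\mathbf{b}})^{-1}$ away from the critical region, could also be used; the $-1/2$ power suffices to produce the first two terms.

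Now I sum over pairs using Lemma \ref{lem:NC-quotient} applied to $W=\{\mathbf{a}':\mathbf{a}\in\mathcal{W}_\tau\}$, which satisfies $\abs{I_{\mathbf{a}'}}\asymp\tau$. Fix $\mathbf{a}$ and decompose the $\mathbf{b}$ dyadically by $\abs{t_\mathbf{a}-t_\mathbf{b}}\sim\sigma$, for $\sigma\geq\tau$ dyadic up to $O(1)$; the set $\abs{t_\mathbf{a}-t_\mathbf{b}}\leq\tau$ has $O(1)$ elements. The diagonal-type contribution is $\#\mathcal{W}_\tau\cdot O(1)\cdot\tau^{2\delta}\lesssim\tau^{\delta}$, which gives the third term. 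For the shell at scale $\sigma$, the count is $\lesssim\tau^{-\delta}\sigma^\delta$, so this shell contributes
\[\tau^{-\delta}\cdot\tau^{-\delta}\sigma^\delta\cdot\tau^{2\delta}(\abs{\xi}\tau\sigma)^{-1/2}=\abs{\xi}^{-1/2}\tau^{-1/2}\sigma^{\delta-1/2}.\]
Because $\delta>1/2$, the sum over dyadic $\sigma\lesssim 1$ is dominated by $\sigma\asymp1$, giving $\abs{\xi}^{-1/2}\tau^{-1/2}$. The $\abs{\xi}^{-1}\tau^{-1}$ term absorbs the endpoint and amplitude-derivative error terms of van der Corput (the amplitude variation contributes a factor $(1+\norm{\text{amp}'}_{L^1})$), or alternatively comes from using the first-derivative bound on part of the range.

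The main obstacle is the pointwise lower bound on $\max(\abs{\phi'},\abs{\phi''})$ in terms of $\abs{t_\mathbf{a}-t_\mathbf{b}}$, uniformly in $x\in U_b$. This requires care with the signs of $d_{\mathbf{a}'}$ and $d_{\mathbf{b}'}$, and with the fact that $\phi'$ involves $u_\mathbf{a}^2-u_\mathbf{b}^2=(u_\mathbf{a}-u_\mathbf{b})(u_\mathbf{a}+u_\mathbf{b})$, where one of the two factors could be small. I would first try splitting into the cases $\abs{\abs{d_{\mathbf{a}'}}-\abs{d_{\mathbf{b}'}}}\gg \tau^{-1/2}\abs{t_\mathbf{a}-t_\mathbf{b}}$, where $\phi'$ is nonvanishing, and the complementary case, where $\phi''$ is large as computed above.
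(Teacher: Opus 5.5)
Your argument is correct, and it takes a genuinely different route from the paper's.

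\textbf{How the paper proceeds.} It factors $\psi'_{\mathbf a,\mathbf b}$ as a difference of squares. It splits $U_b$ into $J^-$ and $J^+$ (where $u_{\mathbf a}\mp u_{\mathbf b}$ is small) and the remainder $J^0$. On $J^\mp$ it writes $\psi'=\varphi\cdot(\alpha x+\beta)$ with $\alpha=c_{\mathbf a'}\mp c_{\mathbf b'}$, $\beta=d_{\mathbf a'}\mp d_{\mathbf b'}$ and $|\varphi|\asymp\tau^{3/2}$. It then applies Lemma~\ref{lem:oscillatory-integral-bound} in a stationary regime (giving $|\alpha|^{-1/2}$) and a non-stationary one (giving $|\beta|^{-1}$). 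The counting uses the two-parameter Lemma~\ref{lem:nonconcentration}, with separate dyadic sums for the $c$-dominated and $d$-dominated cases. The region $J^0$ is non-stationary and is where the $|\xi|^{-1}\tau^{-1}$ term comes from.

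\textbf{How your route differs.} You measure everything by the single projective coordinate $t_{\mathbf a}=c_{\mathbf a'}/d_{\mathbf a'}=\gamma^T_{\mathbf a'}(0)$. This coordinate does not see the sign ambiguity $u\leftrightarrow -u$. As a result the $\pm$ split, the region $J^0$ and Lemma~\ref{lem:nonconcentration} all disappear, and Lemma~\ref{lem:NC-quotient} is used directly.

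The price is a mixed first/second-derivative van der Corput step and a cruder per-pair bound. You only get $\lambda^{-1/2}$, with $\lambda=|\xi|\tau|t_{\mathbf a}-t_{\mathbf b}|$, even when there is no critical point. This costs nothing after summation. In fact you obtain $|\xi|^{-1/2}\tau^{-1/2}+\tau^\delta$, which is stronger than the lemma. The $|\xi|^{-1}\tau^{-1}$ term does not arise from van der Corput error terms, since those bounds already include $\|a\|_\infty+\|a'\|_{L^1}$; you simply do not need it.

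\textbf{Tightening the key step.} Drop the case split in your last paragraph. Also drop the mean value argument in $t$: $1+t_{\mathbf a}x$ and $1+t_{\mathbf b}x$ can have opposite signs, so $1+sx$ may vanish for some $s$ between $t_{\mathbf a}$ and $t_{\mathbf b}$.

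Instead, set $g_{\mathbf a}=u_{\mathbf a}^{-2}$ and $h_{\mathbf a}=g'_{\mathbf a}/g_{\mathbf a}=-2t_{\mathbf a}/(1+t_{\mathbf a}x)$. This $h_{\mathbf a}$ is exactly $\gamma''_{\mathbf a'}/\gamma'_{\mathbf a'}=h_{t_{\mathbf a}}$ from the dual IFS remark. Then use the exact identities
\[\phi''=\xi\bigl[g_{\mathbf a}(h_{\mathbf a}-h_{\mathbf b})+(g_{\mathbf a}-g_{\mathbf b})h_{\mathbf b}\bigr],\qquad h_{\mathbf a}-h_{\mathbf b}=\frac{-2(t_{\mathbf a}-t_{\mathbf b})}{(1+t_{\mathbf a}x)(1+t_{\mathbf b}x)}.\]
On $U_b$ we have $g_{\mathbf a}\asymp\tau$, $|h_{\mathbf b}|\lesssim 1$ and $|1+tx|=|u|/|d|\asymp1$. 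Hence $|\phi''|+C|\phi'|\gtrsim|\xi|\tau|t_{\mathbf a}-t_{\mathbf b}|$ pointwise.

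\textbf{Making the van der Corput step rigorous.} Since $\phi'$ and $\phi''$ are rational with numerators of bounded degree, $U_b$ splits into $O(1)$ intervals. On each one, either $|\phi''|\gtrsim\lambda$, or $|\phi'|\gtrsim\lambda$ with $\phi'$ monotone. So the two van der Corput tests apply and give $\tau^{2\delta}\min(1,\lambda^{-1/2})$ per pair. Your smooth cutoff $\chi$ is unnecessary, because both tests hold on intervals with sharp endpoints.
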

Assuming we have proved these, we can conclude the proof of Theorem \ref{thm:main}.

\begin{proof}[Proof of Theorem \ref{thm:main}]
We use lemmas \ref{lem:approximation-measure}, \ref{lem:bound-derivative-f}, \ref{lma:L2}. We just need to choose $\eta$ and $\tau$ appropriately.
First we choose $\tau$ to make the two extreme terms in the $L^2$ bound comparable: $\tau = \abs{\xi}^{-\frac{1}{2\delta + 1}}$, so that by lemma \ref{lma:L2} we have
\[\norm{f}_{L^2(U_b, \Leb)} \lesssim \abs{\xi}^{-\frac{\delta}{2(2\delta + 1)}},\]
Moreover, by Lemma \ref{lem:bound-derivative-f}:
\[\norm{f'(x)}_\infty \lesssim \abs{\xi}^{\frac{2\delta}{2\delta + 1}}.\]
We now choose $\eta$ to make the two terms in lemma \ref{lem:approximation-measure} comparable:
\[\eta := \abs{\xi}^{-\frac{5\delta}{(3 - \delta)(2\delta + 1)}}.\]
Then if $\abs{\xi} \gtrsim 1$, as we may assume, $I_b + [-\eta, \eta] \subset U_b$. Thus using Lemma \ref{lma:approximation} with the two bounds above:
\begin{align*}
\abs{\hat{\mu}(\xi)}
&= \abs{\int f\deriv\mu}\\
&\leq\sum_{b\in\mathcal{A}} \int_{I_b} \abs{f}\deriv \mu \\
&\lesssim \eta^{(\delta - 1)/2}\abs{\xi}^{-\frac{\delta}{2(2\delta + 1)}} + \eta\abs{\xi}^{\frac{2\delta}{2\delta + 1}}\\ 
&\lesssim \abs{\xi}^{-\frac{\delta(2\delta - 1)}{(2\delta + 1)(3 - \delta)}}.
\end{align*}
\end{proof}

\subsection{Proof of the $L^2(\Leb)$ bound}

We now prove Lemma \ref{lma:L2}. We use the following oscillatory integral bound, slightly modified from \cite[Lemma 6.4]{JS}, which follows by integration by parts, see \cite[Section 4]{Kaufman} and \cite[Lemma 6.4]{JS} for the proofs.
\begin{lem}
\label{lem:oscillatory-integral-bound}
Let $\psi$ be a $C^2$ function such that there exists a $C^1$ function $\varphi$ and $\alpha, \beta$ with
\[\psi'(x) = \varphi(x)\cdot(\alpha x + \beta).\]
Assume moreover that there is $M > 0$ and $r > 0$ such that for every $x$,
\[\frac{1}{M}r \leq \abs{\varphi(x)} \leq Mr,\]
\[\abs{\varphi'(x)} \leq Mr.\]
Let $I$ be a bounded interval included in $[-M, M]$. Then there is a constant $C = C(M) > 0$ such that the following holds for any $\lambda \neq 0$ and $a \in C^1(I)$ with norm $\|a\|_{C^1} := \|a\|_\infty+\|a'\|_\infty$:
\begin{enumerate}
\item If $\alpha \neq 0$, then
\[\abs{\int_I e^{2\pi i\lambda \psi(x)}a(x)\deriv x} \leq C\left(r\abs{\alpha\lambda}\right)^{-1/2}\norm{a}_{C^1}.\]
\item If $\beta \neq 0$ and $\abs{\alpha} \leq \frac{1}{2M}\abs{\beta}$, then
\[\abs{\int_I e^{2\pi i\lambda \psi(x)}a(x)\deriv x }\leq C\left(r\abs{\beta\lambda}\right)^{-1}\norm{a}_{C^1}.\]
\end{enumerate}
\end{lem}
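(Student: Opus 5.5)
We prove this by integration by parts against the phase, writing
\[
e^{2\pi i\lambda\psi}=\frac{1}{2\pi i\lambda\psi'}\,\frac{\deriv}{\deriv x}e^{2\pi i\lambda\psi}
\]
wherever $\psi'\neq0$. On a subinterval $[u,v]\subset I$ where $\psi'$ does not vanish this gives
\[
\abs{\int_u^v e^{2\pi i\lambda\psi}a\,\deriv x}\le \frac{1}{2\pi\abs{\lambda}}\Bigl(\frac{2\norm{a}_\infty}{\min_{[u,v]}\abs{\psi'}}+\int_u^v\frac{\abs{a'}}{\abs{\psi'}}+\int_u^v\frac{\abs{a}\,\abs{\psi''}}{\abs{\psi'}^2}\Bigr).
\]
Since $\psi''=\varphi'(\alpha x+\beta)+\alpha\varphi$, we get
\[
\frac{\psi''}{\psi'^2}=\frac{\varphi'}{\varphi^2(\alpha x+\beta)}+\frac{\alpha}{\varphi(\alpha x+\beta)^2}.
\]
By the hypotheses, $\abs{\varphi'}/\varphi^2\le M^3/r$ and $1/\abs{\varphi}\le M/r$. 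Everything therefore reduces to controlling $\int 1/\abs{\alpha x+\beta}$ and $\int \abs{\alpha}/(\alpha x+\beta)^2$ over the relevant region.

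\textbf{Case (2).} Since $\abs{x}\le M$ and $\abs{\alpha}\le\abs{\beta}/(2M)$, we have $\abs{\alpha x+\beta}\ge\abs{\beta}/2$ on all of $I$. Hence $\abs{\psi'}\ge r\abs{\beta}/(2M)$ and there is no stationary point, so we apply the identity on all of $I$.
\begin{itemize}
\item The boundary term and the $a'$ term give $\lesssim_M \norm{a}_{C^1}/(r\abs{\beta})$, using $\abs{I}\le2M$.
\item The first piece of $\psi''/\psi'^2$ is bounded by $2M^3/(r\abs{\beta})$.
\item The second piece is bounded by $\abs{\alpha}\cdot 4M/(r\beta^2)\le 2/(r\abs{\beta})$, using $\abs{\alpha}\le\abs{\beta}/(2M)$ once more.
\end{itemize}
Multiplying by $\abs{\lambda}^{-1}$ gives $C(M)(r\abs{\beta\lambda})^{-1}\norm{a}_{C^1}$.

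\textbf{Case (1).} Set $T:=r\abs{\alpha\lambda}$. If $T\le1$, the trivial bound $2M\norm{a}_\infty$ suffices, so assume $T>1$. Let $x_0=-\beta/\alpha$ and $\rho:=T^{-1/2}$. Split $I$ into $I\cap B(x_0,\rho)$, which contributes at most $2\rho\norm{a}_\infty$, and the complement, which consists of at most two intervals.

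On the complement, $\abs{\psi'}\ge r\abs{\alpha}\abs{x-x_0}/M\ge r\abs{\alpha}\rho/M$, and we apply the identity on each piece.
\begin{itemize}
\item The boundary terms give $\lesssim (T\rho)^{-1}=T^{-1/2}$.
\item The $\alpha/(\varphi(\alpha x+\beta)^2)$ piece integrates to at most $(M/r)\abs{\alpha}^{-1}\int_{\abs{x-x_0}>\rho}\abs{x-x_0}^{-2}\,\deriv x\lesssim 1/(r\abs{\alpha}\rho)$. After the factor $\abs{\lambda}^{-1}$ this is again $\lesssim T^{-1/2}$.
\item The $a'$ term and the $\varphi'$ piece involve $\int_{\rho<\abs{x-x_0},\,\abs{x}\le M}\abs{x-x_0}^{-1}\,\deriv x\lesssim_M 1+\log(1/\rho)$. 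This gives the contribution $T^{-1}(1+\log T)\lesssim T^{-1/2}$.
\end{itemize}

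\textbf{Main obstacle.} The only delicate point is this logarithmic loss near the stationary point in Case (1). It is absorbed because $T^{-1}\log T\lesssim T^{-1/2}$ for $T>1$, which is why only the weaker exponent $1/2$ is claimed there. Summing all pieces yields $C(M)(r\abs{\alpha\lambda})^{-1/2}\norm{a}_{C^1}$.
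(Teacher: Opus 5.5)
Your proof is correct and follows the route the paper indicates. The paper gives no proof of its own: it defers to Kaufman and Jordan--Sahlsten, remarking only that the lemma follows by integration by parts. Your argument is that standard integration-by-parts argument, carried out with the amplitude $a$ included: in case (2) you integrate by parts on all of $I$, and in case (1) you excise the stationary point $x_0=-\beta/\alpha$ at scale $(r\abs{\alpha\lambda})^{-1/2}$, integrate by parts on the rest, and absorb the logarithmic loss.
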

Case (1) corresponds to the phase having a critical point, hence the order of decay being $\abs{\lambda}^{-1/2}$, and case (2) to the absence of critical points.
\begin{comment}
\begin{proof}
For any $C^1$ function $a$ and $C^2$ function $f$ with $\abs{f} \geq c > 0$, we have
\begin{align*}
    \abs{\int_0^1 e^{if(x)}a(x)\deriv x}
    &= \abs{\int_0^1 e^{if(x)}if'(x)\cdot\frac{a(x)}{if'(x)}\deriv x}\\
    &= \abs{\left[\frac{e^{if(x)}{if'(x)}}a(x)\right]^1_0- \int_0^1 e^{if(x)}\cdot\frac{a'(x)}{if'(x)}\deriv x - \int_0^1 e^{if(x)}\cdot\frac{a(x)f''(x)}{f'(x)^2}\deriv x}\\
    &\lesssim \left(c^{-1} +  c^{-2}\norm{f''}_\infty\right)\norm{a}_{C^1}
\end{align*}
and of course the bound remains valid, changing the implied constant, if the integration is over a different bounded interval.
\end{proof}
\end{comment}

Let us fix $b \in \mathcal{A}$. We now expand the $L^2(U_b, \Leb)$ norm of $f$:
\[
\norm{f}_{L^2(U_b, \Leb)}^2
=
\int_{U_b}\abs{f(x)}^2\,dx
=
\sum_{\mathbf{a},\mathbf{b}\in\mathscr{W}_\tau\atop \mathbf{a},\mathbf{b}\rightsquigarrow b}
\int_{U_b} e^{-2\pi i\xi(\gamma_{\mathbf{a}'}(x)-\gamma_{\mathbf{b}'}(x))}
a_{\mathbf{a},\mathbf{b}}(x)\,dx,
\]
where for $x \in U_b$:
\[
a_{\mathbf{a},\mathbf{b}}(x) = w_{\mathbf{a}'}(x)w_{\mathbf{b}'}(x).
\]
For any $\mathbf{a},\mathbf{b}\in\mathscr{W}_\tau$ with $\mathbf{a}, \mathbf{b} \rightsquigarrow b$, we define
\[\mathcal{I}(\mathbf{a},\mathbf{b}) := \int_{U_b} e^{-2\pi i\xi(\gamma_{\mathbf{a}'}(x)-\gamma_{\mathbf{b}'}(x))}a_{\mathbf{a},\mathbf{b}}(x)\deriv x.\]
Define also for $x \in U_b$:
\[\psi_{\mathbf{a},\mathbf{b}}(x) := \gamma_{\mathbf{a}'}(x)-\gamma_{\mathbf{b}'}(x),\]
so that
\begin{align*}
\psi'_{\mathbf{a},\mathbf{b}}(x)
&= \frac{1}{(c_{\mathbf{a}'}x + d_{\mathbf{a}'})^2} - \frac{1}{(c_{\mathbf{b}'}x + d_{\mathbf{b}'})^2} \\
&= -\frac{\left((c_{\mathbf{a}'}x + d_{\mathbf{a}'}) - (c_{\mathbf{b}'}x + d_{\mathbf{b}'})\right)\left((c_{\mathbf{a}'}x + d_{\mathbf{a}'}) + (c_{\mathbf{b}'}x + d_{\mathbf{b}'})\right)}{\abs{c_{\mathbf{a}'}x + d_{\mathbf{a}'}}^2\abs{c_{\mathbf{b}'}x + d_{\mathbf{b}'}}^2}
\end{align*}
Now consider those $x \in U_b$ such that $\abs{(c_{\mathbf{a}'}x + d_{\mathbf{a}'}) - (c_{\mathbf{b}'}x + d_{\mathbf{b}'})} \leq \frac{1}{K}\tau^{-1/2}$ for some large $K > 0$; they constitute a (possibly empty) interval $J^{-}(\mathbf{a}, \mathbf{b})$. (Here recall that $\abs{c_{\mathbf{a}'}x + d_{\mathbf{a}'}}, \abs{c_{\mathbf{b}'}x + d_{\mathbf{b}'}} \asymp \tau^{-1/2}$ by lemma \ref{lem:contraction}.) Then
\begin{align*}
\abs{(c_{\mathbf{a}'}x + d_{\mathbf{a}'}) + (c_{\mathbf{b}'}x + d_{\mathbf{b}'})}
&\geq 2\abs{c_{\mathbf{a}'}x + d_{\mathbf{a}'}} - \abs{(c_{\mathbf{a}'}x + d_{\mathbf{a}'}) - (c_{\mathbf{b}'}x + d_{\mathbf{b}'})}\\
%&\gtrsim \abs{c_\mathbf{a}x + d_\mathbf{a}}\\
&\gtrsim \tau^{-1/2}.
\end{align*}
if $K$ was chosen large enough. Moreover, the reverse inequality is also true, so we can define
\[\varphi^{-}_{\mathbf{a},\mathbf{b}}(x) := \frac{(c_{\mathbf{a}'}x + d_{\mathbf{a}'}) + (c_{\mathbf{b}'}x + d_{\mathbf{b}'})}{\abs{c_{\mathbf{a}'}x + d_{\mathbf{a}'}}^2\abs{c_{\mathbf{b}'}x + d_{\mathbf{b}'}}^2},\] so that
\[\frac{\deriv}{\deriv x}\psi^{-}_{\mathbf{a},\mathbf{b}}(x) = \varphi^{-}_{\mathbf{a},\mathbf{b}}(x)\cdot\left((c_{\mathbf{a}'}x + d_{\mathbf{a}'}) - (c_{\mathbf{b}'}x + d_{\mathbf{b}'})\right).\] Thus we have for such $x \in J^{-}(\mathbf{a}, \mathbf{b})$:
\[\abs{\varphi^{-}_{\mathbf{a},\mathbf{b}}(x)} \asymp \tau^{3/2}.\]
Moreover, since $\abs{c_{\mathbf{a}'}}, \abs{d_{\mathbf{a}'}} \lesssim \tau^{-1/2}$ by lemma \ref{lem:bound-c-d}, one can check that also
\[\abs{\frac{\deriv}{\deriv x}\varphi^{-}_{\mathbf{a},\mathbf{b}}(x)} \lesssim \tau^{3/2}.\]
Similarly, one can construct an interval $J_{\mathbf{a}, \mathbf{b}}^{+}$ such that for every $x \in J^+(\mathbf{a}, \mathbf{b})$, we have
\begin{align*}
\abs{\psi'_{\mathbf{a},\mathbf{b}}(x)}
&\asymp \tau^{3/2}\abs{(c_{\mathbf{a}'}x + d_{\mathbf{a}'}) + (c_{\mathbf{b}'}x + d_{\mathbf{b}'})}.
\end{align*}
and a similar construction of a function $\varphi^{+}_{\mathbf{a},\mathbf{b}}$ as above.
Moreover, if $K$ was chosen large enough, these two intervals $J^\pm(\mathbf{a},\mathbf{b})$ do not intersect. Therefore we let 
\[\mathcal{I}^\pm(\mathbf{a}, \mathbf{b}) := \int_{J^\pm(\mathbf{a},\mathbf{b})} e^{-2\pi i\xi\psi_{\mathbf{a},\mathbf{b}}(x)}a_{\mathbf{a},\mathbf{b}}(x)\deriv x,\]
\[\mathcal{I}^0(\mathbf{a}, \mathbf{b}) := \int_{J^0(\mathbf{a},\mathbf{b})} e^{-2\pi i\xi\psi_{\mathbf{a},\mathbf{b}}(x)}a_{\mathbf{a},\mathbf{b}}(x)\deriv x,\]
where $J^0(\mathbf{a},\mathbf{b}) := U_b\setminus \left(J^-(\mathbf{a},\mathbf{b}) \cup J^+(\mathbf{a},\mathbf{b})\right)$

We now focus on bounding $\mathcal{I}^-(\mathbf{a}, \mathbf{b})$; the bound for $\mathcal{I}^+(\mathbf{a}, \mathbf{b})$ follows by a similar argument, and that for $\mathcal{I}^0(\mathbf{a}, \mathbf{b})$ will follow by a slightly different, and simpler, argument. First we record the following consequence of lemma \ref{lem:bound-w}:
\begin{lem}
There is a constant $C = C(\Gamma) > 0$ such that for any $\mathbf{a}, \mathbf{b} \in \mathcal{W}_\tau$, we have
\[\sup_{x\in U_b} \abs{a_{\mathbf{a},\mathbf{b}}(x)}, \sup_{x\in U_b} \abs{a'_{\mathbf{a},\mathbf{b}}(x)} \leq C\tau^{2\delta}.\]
\end{lem}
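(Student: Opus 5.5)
This follows directly from Lemma \ref{lem:bound-w} and the product rule. Fix $b \in \mathcal{A}$ and $\mathbf{a}, \mathbf{b} \in \mathcal{W}_\tau$ with $\mathbf{a}, \mathbf{b} \rightsquigarrow b$; these are the only pairs for which $a_{\mathbf{a},\mathbf{b}}$ is defined on $U_b$.

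For the sup bound, note that $a_{\mathbf{a},\mathbf{b}}(x) = w_{\mathbf{a}'}(x) w_{\mathbf{b}'}(x)$. Lemma \ref{lem:bound-w} gives $\sup_{U_b}\abs{w_{\mathbf{a}'}} \leq C\tau^\delta$ and $\sup_{U_b}\abs{w_{\mathbf{b}'}} \leq C\tau^\delta$. Multiplying, we get $\sup_{U_b}\abs{a_{\mathbf{a},\mathbf{b}}} \leq C^2\tau^{2\delta}$.

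For the derivative, the product rule gives
\[a'_{\mathbf{a},\mathbf{b}}(x) = w'_{\mathbf{a}'}(x)\,w_{\mathbf{b}'}(x) + w_{\mathbf{a}'}(x)\,w'_{\mathbf{b}'}(x).\]
Each of the two terms is a product of one factor bounded in sup norm by $C\tau^\delta$ and one derivative also bounded by $C\tau^\delta$, by Lemma \ref{lem:bound-w} again. Hence $\sup_{U_b}\abs{a'_{\mathbf{a},\mathbf{b}}} \leq 2C^2\tau^{2\delta}$.

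The only point needing care is the input from Lemma \ref{lem:bound-w}, namely that $w_{\mathbf{a}'}$ is differentiable on $U_b$ with derivative $O(\tau^\delta)$. I would take this as given, since it is stated earlier. For completeness, it holds for the following reason. We have $w_{\mathbf{a}'} = \bigl(\tfrac{1+x^2}{1+\gamma_{\mathbf{a}'}(x)^2}\abs{\gamma'_{\mathbf{a}'}(x)}\bigr)^\delta$, and the factor in parentheses is $\asymp \tau$ on $U_b$. Its logarithmic derivative is bounded uniformly on $U_b$, because:
\begin{itemize}
\item $\gamma''_{\mathbf{a}'}/\gamma'_{\mathbf{a}'}$ is bounded by Lemma \ref{lem:bounded-distortion};
\item $\gamma_{\mathbf{a}'}(x)$ stays in a bounded set (we may take $U_b$ small enough that $\gamma_{a_{n-1}}(U_b)$ lies near $I_{a_{n-1}}$, so $\gamma_{\mathbf{a}'}(U_b)$ is bounded);
\item $\gamma'_{\mathbf{a}'} = O(\tau)$ and $x$ is bounded.
\end{itemize}
Therefore $\abs{w'_{\mathbf{a}'}} \leq \delta\,\abs{(\log\cdot)'}\,w_{\mathbf{a}'} \lesssim \tau^\delta$. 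Taking the larger of the two constants gives the claimed $C = C(\Gamma)$.
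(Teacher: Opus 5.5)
Your proof is correct and matches the paper, which records this lemma without a written proof as an immediate consequence of Lemma \ref{lem:bound-w} via exactly your product-rule argument. Your extra sketch of Lemma \ref{lem:bound-w} via the logarithmic derivative is also fine; in fact $\abs{2y/(1+y^2)} \leq 1$ makes the boundedness of $\gamma_{\mathbf{a}'}(x)$ unnecessary there.
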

Therefore we get, using lemma \ref{lem:oscillatory-integral-bound}:
\begin{lem}
\label{lem:bound-pieces}
There is a constant $C = C(\Gamma) > 0$ such that for any $\mathbf{a}, \mathbf{b} \in \mathcal{W}_\tau$, the following holds:
\begin{enumerate}
\item If $c_{\mathbf{a}'} - c_{\mathbf{b}'} \neq 0$, then for $\abs{\xi} \geq 1$,
\[\abs{\mathcal{I}^{-}(\mathbf{a}, \mathbf{b})} \leq \frac{C\tau^{-3/4}\abs{\xi}^{-1/2}}{\abs{c_{\mathbf{a}'} - c_{\mathbf{b}'}}^{1/2}}\tau^{2\delta}.\]
\item If $d_{\mathbf{a}'} - d_{\mathbf{b}'} \neq 0$ and $\abs{c_{\mathbf{a}'} - c_{\mathbf{b}'}} \leq \frac{1}{C}\abs{d_{\mathbf{a}'} - d_{\mathbf{b}'}}$, then for $\abs{\xi} \geq 1$,
\[\abs{\mathcal{I}^{-}(\mathbf{a}, \mathbf{b})} \leq \frac{C\tau^{-3/2}\abs{\xi}^{-1}}{\abs{d_{\mathbf{a}'} - d_{\mathbf{b}'}}}\tau^{2\delta}.\] 
\end{enumerate}
\end{lem}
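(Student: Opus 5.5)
The plan is to apply Lemma \ref{lem:oscillatory-integral-bound} directly on the interval $I = J^{-}(\mathbf{a},\mathbf{b})$. We take the phase $\psi = -\psi_{\mathbf{a},\mathbf{b}}$ with frequency $\lambda = \xi$. The amplitude is $a = a_{\mathbf{a},\mathbf{b}}$, whose $C^1$ norm on $U_b$ is $\lesssim \tau^{2\delta}$ by the lemma just recorded. The factorisation needed is already in place. On $J^-(\mathbf{a},\mathbf{b})$ we have
\[\psi_{\mathbf{a},\mathbf{b}}'(x) = -\varphi^{-}_{\mathbf{a},\mathbf{b}}(x)\cdot\left((c_{\mathbf{a}'}-c_{\mathbf{b}'})x + (d_{\mathbf{a}'}-d_{\mathbf{b}'})\right),\]
so we set $\alpha = c_{\mathbf{a}'} - c_{\mathbf{b}'}$, $\beta = d_{\mathbf{a}'} - d_{\mathbf{b}'}$, $\varphi = \mp\varphi^-_{\mathbf{a},\mathbf{b}}$ and $r = \tau^{3/2}$. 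The signs are immaterial since only absolute values enter.

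The hypotheses to verify are the following.
\begin{enumerate}
\item The bounds $M^{-1}\tau^{3/2}\le|\varphi^-|\le M\tau^{3/2}$ and $|(\varphi^-)'|\le M\tau^{3/2}$ hold on $J^-$. The first is the two-sided estimate $|\varphi^-_{\mathbf{a},\mathbf{b}}|\asymp\tau^{3/2}$. The second comes from differentiating the quotient using $|c_{\mathbf{a}'}|,|d_{\mathbf{a}'}|\lesssim\tau^{-1/2}$ (Lemma \ref{lem:bound-c-d}) and $|c_{\mathbf{a}'}x+d_{\mathbf{a}'}|\asymp\tau^{-1/2}$. Each derivative of numerator or denominator costs a factor $\lesssim 1$ relative to its size, giving $\tau^{3/2}$.
\item $J^-\subset U_b\subset[-M,M]$ with $M=M(\Gamma)$, since $U_b$ is a fixed bounded interval.
\end{enumerate}
All these constants depend only on $\Gamma$ and on the fixed choice of $K$. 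Hence $C(M)=C(\Gamma)$.

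In case (1), with $\alpha\ne0$, Lemma \ref{lem:oscillatory-integral-bound}(1) gives
\[|\mathcal I^-(\mathbf{a},\mathbf{b})|\lesssim (\tau^{3/2}|c_{\mathbf{a}'}-c_{\mathbf{b}'}||\xi|)^{-1/2}\tau^{2\delta},\]
which is exactly the claimed bound $\tau^{-3/4}|\xi|^{-1/2}|c_{\mathbf{a}'}-c_{\mathbf{b}'}|^{-1/2}\tau^{2\delta}$.

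In case (2), we take the constant $C$ in the hypothesis to be at least $2M$. Then $|\alpha|\le\frac1{2M}|\beta|$ holds, and part (2) of the lemma gives
\[|\mathcal I^-|\lesssim(\tau^{3/2}|d_{\mathbf{a}'}-d_{\mathbf{b}'}||\xi|)^{-1}\tau^{2\delta},\]
as claimed. Finally, the constant $C$ in the statement is the maximum of $2M$ and the constants produced by the lemma.

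The only step needing care is the derivative bound on $\varphi^-$ together with the uniformity of $M$. This amounts to checking that the denominators $|c x+d|^2$ stay $\asymp\tau^{-1}$ on all of $J^-\subset U_b$, which holds by Lemma \ref{lem:contraction}. One must also make sure $J^-$ is an interval, so that the oscillatory lemma applies on a single interval. This holds because $J^-$ is a sublevel set of the absolute value of an affine function. If $J^-$ is empty there is nothing to prove.
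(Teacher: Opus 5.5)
Your proposal is correct and follows the paper's own argument. The paper deduces this lemma in one line by applying Lemma~\ref{lem:oscillatory-integral-bound} on $J^-(\mathbf{a},\mathbf{b})$ with $\alpha=c_{\mathbf{a}'}-c_{\mathbf{b}'}$, $\beta=d_{\mathbf{a}'}-d_{\mathbf{b}'}$, $\varphi=\pm\varphi^-_{\mathbf{a},\mathbf{b}}$, $r=\tau^{3/2}$ and amplitude $C^1$-norm $\lesssim\tau^{2\delta}$; your additional checks (the derivative bound on $\varphi^-$, $J^-$ being an interval, and taking $C\ge 2M$ so that case (2) applies) supply exactly the details the paper leaves implicit.
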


Thus to bound the Fourier transform, we are left with studying how often the differences $\abs{c_{\mathbf{a}'} - c_{\mathbf{b}'}}$ or $\abs{d_{\mathbf{a}'} - d_{\mathbf{b}'}}$ are small. For this we can use the following non-concentration results which will be proven below: 
\begin{lem}[Non-concentration]\label{lem:nonconcentration}
There is a constant $C = C(\Gamma) > 0$ such that for every $\tau > 0$, $\sigma \geq \tau$ and $\mathbf{a} \in \mathcal{W}_\tau$, we have
\[\#\left\{\mathbf{b} \in \mathcal{W}_\tau\,:\, \abs{c_{\mathbf{a}'} - c_{\mathbf{b}'}} + \abs{d_{\mathbf{a}'} - d_{\mathbf{b}'}} \leq \tau^{-1/2}\sigma 
\text{ or } \abs{c_{\mathbf{a}'} + c_{\mathbf{b}'}} + \abs{d_{\mathbf{a}'} + d_{\mathbf{b}'}} \leq \tau^{-1/2}\sigma \right\} \leq C\tau^{-\delta}\sigma^\delta.\]
\end{lem}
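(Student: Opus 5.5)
Fix $\mathbf{a} \in \mathcal{W}_\tau$. The idea is to reduce the statement to Lemma~\ref{lem:NC-quotient}, applied to the set $W := \{\mathbf{b}' : \mathbf{b} \in \mathcal{W}_\tau\}$. Since $\abs{I_{\mathbf{b}'}} \asymp \abs{I_\mathbf{b}} \asymp \tau$ by Lemma~\ref{lem:quasi-bernoulli} and the properties of $\mathcal{W}_\tau$, this set satisfies the hypothesis of Lemma~\ref{lem:NC-quotient} with some $M = M(\Gamma)$. Moreover, $\mathbf{b} \mapsto \mathbf{b}'$ is at most $2r$-to-one, so counting over $\mathbf{b}$ rather than $\mathbf{b}'$ only costs a constant.

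We treat the condition $\abs{c_{\mathbf{a}'} - c_{\mathbf{b}'}} + \abs{d_{\mathbf{a}'} - d_{\mathbf{b}'}} \leq \tau^{-1/2}\sigma$ first. By Lemma~\ref{lem:bound-c-d}, $\abs{d_{\mathbf{a}'}}, \abs{d_{\mathbf{b}'}} \asymp \tau^{-1/2}$, and by Lemma~\ref{lem:bound-quotient}, $\abs{c_{\mathbf{a}'}/d_{\mathbf{a}'}} \lesssim 1$. Write
\[\frac{c_{\mathbf{b}'}}{d_{\mathbf{b}'}} - \frac{c_{\mathbf{a}'}}{d_{\mathbf{a}'}} = \frac{c_{\mathbf{b}'} - c_{\mathbf{a}'}}{d_{\mathbf{b}'}} + \frac{c_{\mathbf{a}'}}{d_{\mathbf{a}'}}\cdot\frac{d_{\mathbf{a}'} - d_{\mathbf{b}'}}{d_{\mathbf{b}'}}.\]
Each term is then bounded by $\lesssim \tau^{1/2}\cdot\tau^{-1/2}\sigma = \sigma$, so $\abs{c_{\mathbf{b}'}/d_{\mathbf{b}'} - y} \leq C_0\sigma$ with $y := c_{\mathbf{a}'}/d_{\mathbf{a}'}$. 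Applying Lemma~\ref{lem:NC-quotient} with $C_0\sigma \geq \tau$ bounds the number of such $\mathbf{b}'$ by $C\tau^{-\delta}\sigma^\delta$. We may assume $C_0 \geq 1$; if $\sigma$ is large, the bound is trivial anyway since $\#\mathcal{W}_\tau \lesssim \tau^{-\delta}$.

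The second condition, $\abs{c_{\mathbf{a}'} + c_{\mathbf{b}'}} + \abs{d_{\mathbf{a}'} + d_{\mathbf{b}'}} \leq \tau^{-1/2}\sigma$, is handled the same way. We are working in $\mathrm{PSL}_2$, where the sign of the matrix is ambiguous, and the ratio $c/d$ is invariant under $(c,d) \mapsto (-c,-d)$. The identical computation with $d_{\mathbf{a}'}$ replaced by $-d_{\mathbf{a}'}$ and $c_{\mathbf{a}'}$ by $-c_{\mathbf{a}'}$ again gives $\abs{c_{\mathbf{b}'}/d_{\mathbf{b}'} - y} \lesssim \sigma$ with the same $y$. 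So the union of both sets lies inside one set counted by Lemma~\ref{lem:NC-quotient}, and adding the two counts gives the claim.

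The main point to check is that Lemma~\ref{lem:NC-quotient} applies to the words $\mathbf{b}'$, including the degenerate case where $\mathbf{b}'$ is empty or very short. This only happens when $\tau \gtrsim 1$, and then the estimate is trivial because $\#\mathcal{W}_\tau$ is bounded. Apart from this, the argument is a direct change of variables from the pair $(c,d)$ to the ratio $c/d$. This change is legitimate because both $c$ and $d$ have size $\asymp \tau^{-1/2}$, so the map from $(c,d)$ to $c/d$ is Lipschitz with constant $\lesssim \tau^{1/2}$ on the relevant region.
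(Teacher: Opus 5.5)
Your proposal is correct and follows the same route as the paper. The paper bounds $\abs{c_{\mathbf{b}'}/d_{\mathbf{b}'} - c_{\mathbf{a}'}/d_{\mathbf{a}'}} \lesssim \tau^{1/2}\bigl(\abs{c_{\mathbf{a}'}-c_{\mathbf{b}'}}+\abs{d_{\mathbf{a}'}-d_{\mathbf{b}'}}\bigr)$ via Lemmas \ref{lem:bound-quotient} and \ref{lem:bound-c-d}, then applies Lemma \ref{lem:NC-quotient} to the words $\mathbf{b}'$, using $\abs{I_{\mathbf{b}'}}\asymp\tau$; your explicit treatment of the bounded-to-one map $\mathbf{b}\mapsto\mathbf{b}'$, of the sum case via the invariance of $c/d$ under $(c,d)\mapsto(-c,-d)$, and of the degenerate regime $\tau\gtrsim 1$ fills in details the paper leaves implicit.
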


Lemma \ref{lem:nonconcentration} is consequence of Lemma \ref{lem:NC-quotient} on non-concentration arising from $\delta$-regularity of the Patterson-Sullivan measure of the transpose group $\Gamma^T$.

\begin{proof}[Proof of Lemma \ref{lem:nonconcentration}]
Let $\mathbf{a}, \mathbf{b} \in \mathcal{W}_\tau$ and note that
\begin{align*}
\abs{\frac{c_{\mathbf{a}'}}{d_{\mathbf{a}'}} - \frac{c_{\mathbf{b}'}}{d_{\mathbf{b}'}}}
&= \abs{\frac{\left(c_{\mathbf{a}'} - c_{\mathbf{b}'}\right)d_{\mathbf{b}'} +c_{\mathbf{b}'}\left(d_{\mathbf{b}'} - d_{\mathbf{a}'}\right)}{d_{\mathbf{a}'}d_{\mathbf{b}'}}}\\
&\leq \frac{\abs{c_{\mathbf{a}'} - c_{\mathbf{b}'}}}{\abs{d_{\mathbf{a}'}}} + \abs{\frac{c_{\mathbf{b}'}}{d_{\mathbf{b}'}}}\frac{\abs{d_{\mathbf{b}'} - d_{\mathbf{a}'}}}{\abs{d_{\mathbf{a}'}}}\\
&\lesssim \tau^{1/2}\left(\abs{c_{\mathbf{a}'} - c_{\mathbf{b}'}} + \abs{d_{\mathbf{a}'} - d_{\mathbf{b}'}}\right),
\end{align*}
where we have used lemmas \ref{lem:bound-quotient} and \ref{lem:bound-c-d}. Because by lemma \ref{lem:quasi-bernoulli}, $\abs{I_{\mathbf{a}'}} \asymp \abs{I_\mathbf{a}} \asymp \tau$, we can then apply lemma \ref{lem:NC-quotient}.

The proof for sums instead of differences is almost identical.
\end{proof}

Let $J$ be the largest integer such that $2^{-J} \geq \tau$. For every $j = 1, \ldots, J - 1$, consider
\[\mathcal{A}_j(\mathbf{a}) := \left\{\mathbf{b} \in \mathcal{W}_\tau\,:\,\tau^{-1/2}2^{-(j + 1)} \leq \abs{c_{\mathbf{b}'} - c_{\mathbf{a}'}} < \tau^{-1/2}2^{-j}, \abs{c_{\mathbf{b}'} - c_{\mathbf{a}'}} > \frac{1}{C}\abs{d_{\mathbf{b}'} - d_{\mathbf{a}'}}\right\},\]
where $C$ is the constant in lemma \ref{lem:bound-pieces}. We have
\begin{align*}
\#\mathcal{A}_j(\mathbf{a})
&\leq \#\left\{\mathbf{b}\in\mathcal{W}_\tau\,:\,\abs{c_{\mathbf{b}'} - c_{\mathbf{a}'}} + \abs{d_{\mathbf{b}'} - d_{\mathbf{a}'}} \leq (C + 1)\tau^{-1/2}2^{-j}\right\}
\lesssim \tau^{-\delta}2^{-\delta j}
\end{align*}
by lemma \ref{lem:nonconcentration}. Let also
\[\mathcal{A}'(\mathbf{a}) := \left\{\mathbf{b} \in \mathcal{W}_\tau\,:\, \abs{c_{\mathbf{b}'} - c_{\mathbf{a}'}} < \tau^{-1/2}2^{-J}, \abs{c_{\mathbf{b}'} - c_{\mathbf{a}'}} > \frac{1}{C}\abs{d_{\mathbf{b}'} - d_{\mathbf{a}'}}\right\},\]
then
\begin{align*}
\#\mathcal{A}'(\mathbf{a})
&\leq \#\left\{\mathbf{b}\in\mathcal{W}_\tau\,:\,\abs{c_{\mathbf{b}'} - c_{\mathbf{a}'}} + \abs{d_{\mathbf{b}'} - d_{\mathbf{a}'}} \leq (C + 1)\tau^{-1/2}2^{-J}\right\}
\lesssim \tau^{-\delta}2^{-\delta J} \lesssim 1,
\end{align*}
and for every $\mathbf{b} \in \mathcal{A}'(\mathbf{a})$, we have the trivial bound
\[\mathcal{I}^{-}(\mathbf{a},\mathbf{b}) \lesssim \tau^{2\delta}.\]
Then using lemma \ref{lem:bound-pieces}
\begin{align*}
\sum_{\mathbf{b} \in \mathcal{W}_\tau} \abs{\mathcal{I}^{-}(\mathbf{a}, \mathbf{b})}\mathbf{1}_{\left\{\abs{c_{\mathbf{b}'} - c_{\mathbf{a}'}} < \tau^{-1/2}, \abs{c_{\mathbf{b}'} - c_{\mathbf{a}'}} > \frac{1}{C}\abs{d_{\mathbf{b}'} - d_{\mathbf{a}'}}\right\}  }
&= \sum_{j = 0}^{J - 1}\sum_{\mathbf{b} \in \mathcal{A}_j(\mathbf{a})} \abs{\mathcal{I}^{-}(\mathbf{a}, \mathbf{b})} + \sum_{\mathbf{b} \in \mathcal{A}'(\mathbf{a})} \abs{\mathcal{I}^{-}(\mathbf{a}, \mathbf{b})} \\
&\lesssim \sum_{j = 0}^{J - 1} \abs{\xi}^{-1/2}\tau^{-3/4}\left(\tau^{-1/2}2^{-j}\right)^{-1/2}\tau^{2\delta}\cdot\tau^{-\delta}2^{-\delta j} + \tau^{2\delta} \\
&\lesssim \abs{\xi}^{-1/2} \tau^{-1/2}\tau^{\delta} \sum_{j = 0}^{J - 1}2^{(1/2-\delta)j} + \tau^{2\delta} \\
&\lesssim \abs{\xi}^{-1/2}\tau^{-1/2}\tau^{\delta} + \tau^{2\delta}.
\end{align*}

Now define
\[\mathcal{B}_j(\mathbf{a}) := \left\{\mathbf{b} \in \mathcal{W}_\tau\,:\,\tau^{-1/2}2^{-(j + 1)} \leq \abs{d_{\mathbf{b}'} - d_{\mathbf{a}'}} < \tau^{-1/2}2^{-j}, \abs{c_{\mathbf{b}'} - c_{\mathbf{a}'}} \leq \frac{1}{C}\abs{d_{\mathbf{b}'} - d_{\mathbf{a}'}}\right\},\]
where $C$ is the constant in lemma \ref{lem:bound-pieces}. As above, we have $\#\mathcal{B}_j(\mathbf{a})
\lesssim \tau^{-\delta}2^{-\delta j}$. Let also
\[\mathcal{B}'(\mathbf{a}) := \left\{\mathbf{b} \in \mathcal{W}_\tau\,:\, \abs{d_{\mathbf{b}'} - d_{\mathbf{a}'}} < \tau^{-1/2}2^{-J}, \abs{c_{\mathbf{b}'} - c_{\mathbf{a}'}} \leq \frac{1}{C}\abs{d_{\mathbf{b}'} - d_{\mathbf{a}'}}\right\},\]
then $\#\mathcal{B}'(\mathbf{a}) \lesssim 1$. So we get
\begin{align*}
\sum_{\mathbf{b} \in \mathcal{W}_\tau} \abs{\mathcal{I}^{-}(\mathbf{a}, \mathbf{b})}\mathbf{1}_{\left\{\abs{d_{\mathbf{b}'} - d_{\mathbf{a}'}} < \tau^{-1/2}, \abs{c_{\mathbf{b}'} - c_{\mathbf{a}'}} \leq \frac{1}{C}\abs{d_{\mathbf{b}'} - d_{\mathbf{a}'}}\right\}  }
&= \sum_{j = 0}^{J - 1}\sum_{\mathbf{b} \in \mathcal{B}_j(\mathbf{a})} \abs{\mathcal{I}^{-}(\mathbf{a}, \mathbf{b})} + \sum_{\mathbf{b} \in \mathcal{B}'(\mathbf{a})} \abs{\mathcal{I}^{-}(\mathbf{a}, \mathbf{b})} \\
&\lesssim \sum_{j = 0}^{J - 1} \abs{\xi}^{-1}\tau^{-3/2}\left(\tau^{-1/2}2^{-j}\right)^{-1}\tau^{2\delta}\cdot\tau^{-\delta}2^{-\delta j} + \tau^{2\delta} \\
&\lesssim \abs{\xi}^{-1} \tau^{-1/2}\tau^{\delta} \sum_{j = 0}^{J - 1}2^{(1-\delta)j} + \tau^{2\delta} \\
&\lesssim \abs{\xi}^{-1} \tau^{-1/2}\tau^{\delta}2^{(1 - \delta)J} + \tau^{2\delta} \\
&\lesssim \abs{\xi}^{-1}\tau^{-3/2}\tau^{2\delta} + \tau^{2\delta}.
\end{align*}

On the other hand, we also have using lemma \ref{lem:bound-pieces}
\begin{align*}
\sum_{\mathbf{b} \in \mathcal{W}_\tau} \abs{\mathcal{I}^{-}(\mathbf{a}, \mathbf{b})}\mathbf{1}_{\left\{\abs{c_{\mathbf{b}'} - c_{\mathbf{a}'}} \geq \tau^{-1/2}\right\}}
&\lesssim \sum_{\mathbf{b} \in \mathcal{W}_\tau} \abs{\xi}^{-1/2}\tau^{-3/4}\tau^{1/2}\tau^{2\delta}\\
&\lesssim \abs{\xi}^{-1/2}\tau^{-1/2}\tau^{\delta}
\end{align*}
and similarly
\begin{align*}
\sum_{\mathbf{b} \in \mathcal{W}_\tau} \abs{\mathcal{I}^{-}(\mathbf{a}, \mathbf{b})}\mathbf{1}_{\left\{\abs{d_{\mathbf{b}'} - d_{\mathbf{a}'}} \geq \tau^{-1/2}\right\}}
&\lesssim \abs{\mathcal{W}_\tau} \abs{\xi}^{-1}\tau^{-3/2}\tau^{1/2}\tau^{2\delta}\\
&\lesssim \abs{\xi}^{-1}\tau^{-1/2}\tau^{\delta}.
\end{align*}
Thus we have, summing all the pieces, and recalling that $\abs{\xi} \geq 1$:
\begin{align*}
\sum_{\mathbf{b}\in\mathcal{W}_\tau} \abs{\mathcal{I}^{-}(\mathbf{a}, \mathbf{b})} \lesssim \abs{\xi}^{-1/2}\tau^{-1/2}\tau^{\delta} + \abs{\xi}^{-1}\tau^{-3/2}\tau^{2\delta} + \tau^{2\delta}.
\end{align*}
Finally, summing over $\mathbf{a} \in \mathcal{W}_\tau$ we get
\[
\sum_{\mathbf{a},\mathbf{b}\in\mathcal{W}_\tau} \abs{\mathcal{I}^{-}(\mathbf{a}, \mathbf{b})} \lesssim \abs{\xi}^{-1/2}\tau^{-1/2} + \abs{\xi}^{-1}\tau^{-3/2}\tau^{\delta} + \tau^{\delta}.
\]
Adapting lemma \ref{lem:bound-pieces}, replacing differences $c_\mathbf{b} - c_\mathbf{a}$, $d_\mathbf{b} - d_\mathbf{a}$ with sums, we get similarly:
\[
\sum_{\mathbf{a},\mathbf{b}\in\mathcal{W}_\tau} \abs{\mathcal{I}^{+}(\mathbf{a}, \mathbf{b})} \lesssim \abs{\xi}^{-1/2}\tau^{-1/2} + \abs{\xi}^{-1}\tau^{-3/2}\tau^{\delta} + \tau^{\delta}.
\]
Let us now bound the part with $\mathcal{I}^0$.
\begin{claim}We have
\[\sum_{\mathbf{a},\mathbf{b}\in\mathcal{W}_\tau} \abs{\mathcal{I}^0(\mathbf{a},\mathbf{b})} \lesssim \abs{\xi}^{-1}\tau^{-1}.\]
\end{claim}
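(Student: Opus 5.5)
On $J^0(\mathbf{a},\mathbf{b})$ both factors of $\psi'_{\mathbf{a},\mathbf{b}}$ are large, so the phase has no critical points there. The plan is a single integration by parts on each piece, and then summing the trivial count $(\#\mathcal{W}_\tau)^2$.

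\textbf{Lower bound on the phase derivative.} By construction, on $J^0(\mathbf{a},\mathbf{b})$ we have both
\[\abs{(c_{\mathbf{a}'}x+d_{\mathbf{a}'})-(c_{\mathbf{b}'}x+d_{\mathbf{b}'})}\ge K^{-1}\tau^{-1/2}\quad\text{and}\quad \abs{(c_{\mathbf{a}'}x+d_{\mathbf{a}'})+(c_{\mathbf{b}'}x+d_{\mathbf{b}'})}\ge K^{-1}\tau^{-1/2}.\]
Here I take $J^+$ to be defined as $J^-$ with the sum in place of the difference. The denominators satisfy $\abs{c_{\mathbf{a}'}x+d_{\mathbf{a}'}}^2\abs{c_{\mathbf{b}'}x+d_{\mathbf{b}'}}^2\asymp\tau^{-2}$ by the lemma on $\mathcal{W}_\tau$. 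The factorisation of $\psi'_{\mathbf{a},\mathbf{b}}$ then gives
\[\abs{\psi'_{\mathbf{a},\mathbf{b}}(x)}\gtrsim \tau^{-1/2}\cdot\tau^{-1/2}\cdot\tau^{2}=\tau\quad\text{for } x\in J^0(\mathbf{a},\mathbf{b}).\]
For the second derivative, $\psi''_{\mathbf{a},\mathbf{b}}=\gamma_{\mathbf{a}'}''-\gamma_{\mathbf{b}'}''$. Bounded distortion (lemma \ref{lem:bounded-distortion}) together with $\abs{\gamma'_{\mathbf{a}'}}\asymp\tau$ gives $\norm{\psi''_{\mathbf{a},\mathbf{b}}}_\infty\lesssim\tau$.

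\textbf{Integration by parts.} The set $J^0(\mathbf{a},\mathbf{b})$ is $U_b$ with two intervals removed, so it is a union of at most three intervals. On each of them I write $e^{-2\pi i\xi\psi}=\frac{1}{-2\pi i\xi\psi'}\frac{d}{dx}e^{-2\pi i\xi\psi}$ and integrate by parts once. This bounds each integral by
\[\abs{\xi}^{-1}\Big(\frac{\norm{a_{\mathbf{a},\mathbf{b}}}_\infty}{\inf\abs{\psi'}}+\int\frac{\abs{a'_{\mathbf{a},\mathbf{b}}}}{\abs{\psi'}}+\int\frac{\abs{a_{\mathbf{a},\mathbf{b}}}\abs{\psi''}}{\abs{\psi'}^2}\Big).\]
Using $\norm{a_{\mathbf{a},\mathbf{b}}}_{C^1}\lesssim\tau^{2\delta}$ (the lemma following lemma \ref{lem:bound-w}) and $\abs{U_b}\lesssim1$, the three terms are $\lesssim\tau^{2\delta}\tau^{-1}$, $\tau^{2\delta}\tau^{-1}$ and $\tau^{2\delta}\tau\cdot\tau^{-2}$ respectively. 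Hence
\[\abs{\mathcal{I}^0(\mathbf{a},\mathbf{b})}\lesssim\abs{\xi}^{-1}\tau^{2\delta-1}.\]
Alternatively, one could invoke case (2) of lemma \ref{lem:oscillatory-integral-bound}, but the direct computation avoids fitting the phase into the $\varphi\cdot(\alpha x+\beta)$ form.

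\textbf{Summation.} Since $\#\mathcal{W}_\tau\lesssim\tau^{-\delta}$, summing over all pairs gives
\[\sum_{\mathbf{a},\mathbf{b}}\abs{\mathcal{I}^0(\mathbf{a},\mathbf{b})}\lesssim\tau^{-2\delta}\,\abs{\xi}^{-1}\tau^{2\delta-1}=\abs{\xi}^{-1}\tau^{-1},\]
which is the claim.

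\textbf{Main obstacle.} The only delicate point is the lower bound $\abs{\psi'}\gtrsim\tau$ uniformly on $J^0$. This requires the cutoff constant $K$ to be the same one used to define $J^\pm$, so that the complement genuinely has both factors of size $\gtrsim\tau^{-1/2}$. It also requires the boundary terms from the at most three subintervals to be controlled uniformly, which is immediate from the bound on $\norm{a_{\mathbf{a},\mathbf{b}}}_\infty$.
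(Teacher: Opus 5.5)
Your proposal is correct and takes essentially the same approach as the paper: on $J^0(\mathbf{a},\mathbf{b})$ the paper likewise obtains $\abs{\psi'_{\mathbf{a},\mathbf{b}}}\asymp\tau$ and $\abs{\psi''_{\mathbf{a},\mathbf{b}}}\lesssim\tau$, integrates by parts on the at most three component intervals to get $\abs{\mathcal{I}^0(\mathbf{a},\mathbf{b})}\lesssim\abs{\xi}^{-1}\tau^{2\delta-1}$, and sums over the $\lesssim\tau^{-2\delta}$ pairs. Your explicit definition of $J^+$ through the sum factor and your term-by-term integration-by-parts bookkeeping fill in details that the paper leaves to the reader.
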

\begin{proof}
Let $\mathbf{a},\mathbf{b}\in\mathcal{W}_\tau$. Using the discussion above constructing $\mathcal{I}^-$, say, it is straightforward to check that for all $x \in J^0(\mathbf{a},\mathbf{b})$, we have
\[\abs{\psi'_{\mathbf{a},\mathbf{b}}(x)} \asymp \tau,\]
and
\[\abs{\psi''_{\mathbf{a},\mathbf{b}}(x)} \lesssim \tau.\]
Note that $J^0(\mathbf{a},\mathbf{b})$ is a union of at most 3 intervals. Therefore by integration by parts as in the proof of lemma \ref{lem:oscillatory-integral-bound}, we get
\[\abs{\mathcal{I}^0(\mathbf{a},\mathbf{b})} \lesssim \abs{\xi}^{-1}\tau^{2\delta - 1},\]
and the result follows by summing.
\end{proof}
Combining all the bounds, we get (using $\delta > 1/2$):
\[
\norm{f}_{L^2(U_b, \Leb)}^2 \leq \sum_{\mathbf{a},\mathbf{b}\in\mathcal{W}_\tau} \abs{\mathcal{I}(\mathbf{a}, \mathbf{b})} \lesssim \abs{\xi}^{-1/2}\tau^{-1/2} + \abs{\xi}^{-1}\tau^{-1} + \tau^{\delta}.
\]

\section*{Acknowledgements} We thank Semyon Dyatlov, Thomas Jordan, Gaétan Leclerc, Alexandre Minetto and Tomas Persson for useful discussions. 

\section*{AI statement} AI tools were used for proofreading the paper.

\end{document}